\definecolor{maroon}{rgb}{.69,.188,.376}
\definecolor{darkgreen}{rgb}{0,.5,0}
\definecolor{darkblue}{rgb}{0,0,.5}
\definecolor{magenta}{rgb}{1,0,1}
\numberwithin{equation}{section}
\newtheorem{thm}{Theorem}[section]
\newtheorem{lem}{Lemma}[section]
\newtheorem{prop}{Proposition}[section]
\newtheorem{defn}{Definition}[section]
\newtheorem{rem}{Remark}[section]
\theoremstyle{definition}
\newcommand{\Cov}{\text{Cov}}
\newcommand{\Var}{\text{Var}}
\numberwithin{equation}{section}
\newcommand{\be}{\begin{equation}}
\newcommand{\ee}{\end{equation}}
\newcommand{\bes}{\begin{equation*}}
\newcommand{\ees}{\end{equation*}}
\newcommand{\mP}{\mathbb{P}}
\newcommand{\R}{\mathbb{R}}
\newcommand{\Z}{\mathbb{Z}}
\newcommand{\N}{\mathbf{N}}
\newcommand{\g}{\textbf{g}}
\newcommand{\bean}{\begin{eqnarray*}}
\newcommand{\eean}{\end{eqnarray*}}
\newcommand{\mf}{\mathbf}
\newcommand{\msr}{\mathscr} 
\newcommand{\uv}{\mathbf u}
\newcommand{\W}{\mathbf W}
\newcommand{\D}{\mathbf D}
\newcommand{\vv}{\mathbf v}
\newcommand{\cl}{\mathscr C_1}
\newcommand{\cu}{\mathscr C_2}
\newcommand{\dlip}{\mathscr D}
\begin{document}

\title[Small ball and support theorems for SPDE]{Small ball probabilities and a support theorem for the  stochastic heat equation}
\author{Siva Athreya \and Mathew Joseph \and Carl Mueller}
\address{Siva Athreya\\ Statmath Unit\\ Indian Statistical Institute\\ 8th Mile Mysore Road\\ Bangalore 560059
} \email{athreya@isibang.ac.in}

\address{Mathew Joseph\\ Statmath Unit\\ Indian Statistical Institute\\ 8th Mile Mysore Road\\ Bangalore 560059
} \email{m.joseph@isibang.ac.in}

\address{Carl Mueller \\Department of Mathematics, University of Rochester Rochester, NY  14627
}
\email{carl.e.mueller@rochester.edu}

\keywords{heat equation, white noise, stochastic partial differential 
equations, small ball, support.}
\subjclass[2010]{Primary, 60H15; Secondary, 60G17, 60G60.}

\begin{abstract}
We consider the following stochastic partial differential equation on
$t \geq 0, x\in[0,J], J \geq 1$ where we consider $[0,J]$ to be the
circle with end points identified:
\begin{equation*}
\partial_t{\mf u}(t,x) =\frac{1}{2}\,\partial_x^2 {\mf u}(t,x)
+ \mf g(t,x,\mf u) + {\mf \sigma}(t,x, {\mf u})\,\dot {\mf W}(t,x) ,
\end{equation*}
and $\dot {\mf W }(t,x)$ is 2-parameter $d$-dimensional vector valued
white noise and ${\mf \sigma}$ is function from $\R_+\times \R \times
\R^d$ to space of symmetric $d\times d$ matrices
which is Lipschitz in $\mf u$. We assume that $\sigma$ is uniformly
elliptic and that $\mf g$ is uniformly bounded.  Assuming that ${\mf
  u}(0,x) \equiv \mf 0$, we prove small-ball probabilities for the
solution $u$. We also prove a support theorem for solutions, when
${\mf u}(0,x)$ is not necessarily zero.
\end{abstract} 
\maketitle

\section{Introduction}
In this article we study small-ball probabilities and support 
theorems  for solutions to the stochastic heat equation given by
\begin{equation} \label{mainspde}
\partial_t{\mf u} (t,x) =\frac{1}{2}\partial_x^2{\mf u}(t,x) 
+ \mf g(t,x,\mf u) + {\mf \sigma}(t,x, {\mf u})\, \dot{\mf W}(t,x).
\end{equation}
where, $t \in \R_+, x \in \R$,  
$\dot {\mf W }(t,x)=(\dot {\mf W }_1(t,x),\ldots,\dot {\mf W }_d(t,x))$
 is $d$-dimensional space-time white noise, with $d \geq 1$ and  
${\mf \sigma}$ is a function from  $\R_+\times \R \times \R^d$ to space of symmetric $d\times d$ matrices. Assuming 
that $\sigma$ is Lipschitz in $\mf u$ and uniformly elliptic, $\mf g$ is 
uniformly bounded, and ${\mf u}(0,x)\equiv\mf 0$, 
our main result Theorem \ref{th:small-ball} provides upper and lower bounds for the small 
ball probabilities of the solution to (\ref{mainspde}). This result gives bounds on the probability that the profile 
${\mf u}(t,\cdot)$ {\it stays close} to the zero profile up to time $T$, see Theorem \ref{th:small-ball} for the precise statement. As a consequence of 
the above result we are able to prove a {\it support theorem}, which
provides similar bounds on the probability that the profile ${\mf u}(t,\cdot)$ stays close to a twice differentiable function up to time $T$,  see Theorem
\ref{th:support} for the precise statement.

Small ball problems have been well studied and have a long history in
probability theory, see \cite{LS01} for a survey.  More precisely, for
a stochastic process $X_t$ starting at 0, we are interested in the
probability that $X_t$ stays near its starting point for a long time,
that is,
$$P(\sup_{0\leq t\leq T}|X_t|<\varepsilon)$$ where $\varepsilon>0$ is
small. When $X_t$ is Brownian motion, small ball estimates follow from
the reflection principle or from the study of eigenvalues, among other
techniques.  Donsker and Varadhan \cite{DV75} obtained small ball
estimates for a wide class of Markov processes as a result of their
theory of large deviations of local time.  For other processes, the
complexity of the small ball estimates are well-known. Moreover, in
general small ball probabilities are usually harder to estimate than
the the large deviation probability that $X_t$ achieves unusually
large values, that is $$P(\sup_{0\leq t\leq T}|X_t|>\lambda)$$ for
large values of $\lambda$.

For some class of Gaussian processes small ball probabilities can be
determined.  Often these results are given in terms of metric entropy
estimates which are hard to explicitly compute \cite{KL93}.  One
exceptional case is the Brownian sheet, for which fairly sharp small
ball estimates are explicitly known, see Bass \cite{B88} and
Talagrand \cite{T94}. In \cite{spde-utah}, Xiao provides small ball
estimates for Gaussian processes that satisfy a certain condition 
which is related to the Gaussian concept of local nondeterminism. 
In \cite{LS01}, an overview of known results on Gaussian processes and
references on other processes such as fractional Brownian motion are
given.

There has not been much exploration of small ball probabilities in the
context of stochastic PDEs. Lototsky \cite{L17} has studied small ball
problems for a linear SPDE with additive white noise, where the
solution is a Gaussian process.  Martin \cite{Mar04} followed the
approach of Talagrand \cite{T94} to study the following stochastic
wave equation.
\begin{equation}
\label{eq:martin}
\partial_t^2u =\partial_x^2u+f(u)+g(t,x)\dot{W}(t,x)
\end{equation}
where $\dot{W}(t,x)$ is two-parameter white noise and $g(t,x)$ is a 
deterministic function.  Without $f(u)$, the solution $u$ would be a 
Gaussian process of the type studied by Bass \cite{B88} and Talagrand 
\cite{T94}. Although \eqref{mainspde} and \eqref{eq:martin} have similar
multiplicative noise terms, in our case the noise coefficient ${\mf
  \sigma}(t,x, {\mf u})$ depends on the solution, and this dependence
takes us away from Gaussian processes setting.

Small ball probabilities have many applications, for e.g. they play
key role in studying the small scale behavior of Gaussian processes,
such as the Hausdorff dimension of the range (see \cite{B88},
\cite{T95}). Another key application is the support theorem. For e.g.,
in the case of Brownian motion an application of Girsanov Theorem yields the
classical support theorem (See \cite[Proposition 6.5 and Theorem
  6.6 on pages 59-60]{B95}). Once we have obtained small probability
estimates for solution \eqref{mainspde} then we use the Girsanov
Theorem for SPDE to obtain a support theorem for solution.

We are now ready to state our main results.

\subsection{Main Results}
\label{sec:model}
For any ${\mf u} \in \R^d$ we shall  denote $\mid {\mf u} \mid$ to be 
the standard Euclidean norm on $\R^d$ and 
$\langle {\mf u}, {\mf v} \rangle$ denote the inner product between 
${\mf u}, {\mf v} \in \R^d$.  ${\mathcal M}_d(\R)$ will denote the 
space of symmetric $d\times d$ matrices over real numbers. Let 
$(\Omega, {\mathcal F}, {\mathcal F}_t, \mP)$ be a filtered 
Probability space on which $\dot{\mf W}=\dot{\mf W}(t,x)$ is a $d$-dimensional 
random vector whose components are i.i.d. two-parameter white noises 
adapted to ${\mathcal F}_t$.  

We consider vector-valued solutions $\mf u(t,x) \in \mathbb{R}^d$, to
the following stochastic heat equation (SHE) 
\begin{equation}
\label{eq:she}
\begin{split}
\partial_t {\mf u} (t,x) &=\frac{1}{2}\,\partial_x^2{\mf u}(t,x) 
+ \mf g(t,x,\mf u(t,x)) + {\mf \sigma}(t,x, {\mf u}(t,x))\,\dot {\mf  W}
(t,x).  \\
\mf u(0,x)&=\mf u_0(x)\equiv{\mf 0},  
\end{split}
\end{equation}
on the circle with $x\in [0,J]$ and endpoints identified, and for the 
function 
$\sigma : \R_+ \times \R\times \R^d \rightarrow {\mathcal M}_d (\R)$. 
We assume that $\mf g: \R_+ \times \R\times \R^d \rightarrow \R^d$ is 
uniformly bounded, $\sigma(t,x,\mf u)$ is Lipschitz continuous in the 
third variable, that is there is a constant $\mathscr D>0$ such that 
for all $t\ge 0, \, x\in [0,J],\,\mf u,\mf v \in \R^d$,
\begin{equation}\label{eq:sigma:lip:2}
\vert \sigma(t,x, \mf u) - \sigma(t,x,\mf v) \vert \le \mathscr D \vert \mf u - \mf v\vert. 
\end{equation}
We will further assume that the functions $\sigma$ is uniformly 
elliptic, that is there are constants
${\mathscr C}_1, {\mathscr C}_2, >0$ such that for all 
$t\ge0, \, x\in [0,J],\, \mf u \in \R^d,\, \mf y \in \R^d$ with
$|\mf y|=1$,
\begin{equation}
\label{eq:sigma:ue}
{\mathscr C}_1 \le \langle\mf y, \,\sigma(t,x, \mf u) \mf y\rangle \le  {\mathscr C}_2. 
\end{equation}
 The above implies that the matrix valued function $\sigma$ is positive definite everywhere (in particular invertible) and that all the eigenvalues of $\sigma$ are uniformly bounded above and away from $0$. 
 
As is usual in stochastic differential equations, \eqref{eq:she} is not 
well-posed as written, because the solution $\mf u$ is not 
differentiable and $\dot{\mf W}$ only exists as a generalized 
function.  We take \eqref{eq:she} to be shorthand for the mild form:
\begin{equation}
\label{eq:weak-form}
\begin{split}
\mf u(t,x)&=\int_{0}^{J}G(t,x-y)\mf u_0(y)
+\int_{0}^{t}\int_{0}^{J}G(t-s,x-y)\mf g(s,y,\mf u(s,y))dyds \\
&\quad +\int_{0}^{t}\int_{0}^{J}G(t-s,x-y)\sigma(s,y,\mf u(s,y))\,\mf W
(dyds)
\end{split}
\end{equation}
where $G:\R_+\times[0,J]\rightarrow\R$ is the fundamental solution of 
the heat equation 
\begin{align*}
\partial_t G(t,x) 
&=\frac{1}{2}\,\partial_x^2 G(t,x)  \\
G(0,x)&=\delta(x).
\end{align*}
where $[0,J]$ is the circle with endpoints identified.  
Furthermore, the final integral in \eqref{eq:weak-form} is a white 
noise integral in the sense of Walsh \cite{wals}. We give more 
information about the heat kernel in Section \ref{sec:heat-kernel}. 
Given an initial profile $\mf u_0$ that is continuous then it is well known
that there exists a unique strong solution to \eqref{eq:she} (see for example
section 6, page 23 of \cite{spde-utah}; the proof there can be easily
modified to cover \eqref{eq:she}). We are now ready to state the main results of this paper.

\begin{thm}
\label{th:small-ball} 
Consider the solution to \eqref{eq:she} and let the assumptions \eqref{eq:sigma:lip:2} and \eqref{eq:sigma:ue} hold. Then
\begin{enumerate}
 \item[(a)] There is a ${\mathscr D}_0  (J,\cl, \cu)>0 $ and positive constants 
$\mf{C}_0,\mf{C}_1,\mf{C}_2,\mf{C}_3$ depending only on
  $d, \cl, \cu$ and  $\varepsilon_0$ additionally  depending on $\sup\limits_{t, x, {\mf u} } \mid {\mf g}(t,x,{\mf u})\mid$
such that for any $\dlip<\dlip_0$ and all $0 < \varepsilon < \varepsilon_0,\,T>0$ we have
\begin{align} \label{corder}
\mf{C}_0\exp\left(-\mf{C}_1\frac{TJ}{\varepsilon^{6}}\right)
\leq P\left(\sup_{\substack{0\leq t\leq T\\x\in[0,J]}}|\mf u(t,x)|
 \leq\varepsilon\right)  \leq \mf{C}_2\exp\left(-\mf{C}_3\frac{TJ}{\varepsilon^{6}}\right).
\end{align}

\item[(b)] For any ${\mathscr D}$ and $0<\delta<1$, there exist positive 
constants $ \mf{C}_0,\mf{C}_1, \mf{C}_2,\mf{C}_3$  depending only on $d, \cl, \cu$ and
 $\varepsilon_0$ additionally depending on $J, \dlip,\delta, \sup\limits_{t, x, {\mf u} } \mid {\mf g}(t,x,{\mf u})\mid$ 
 such that for all $0 < \varepsilon < \varepsilon_0,\, T>0$ we have
  \begin{align}  \label{acorder}
\mf{C}_0\exp\left(-{\mf C}_1\frac{TJ^{1+(\delta/2)}}{\varepsilon^{6 + \delta}}\right)
\leq P\left(\sup_{\substack{0\leq t\leq T\\x\in[0,J]}}|\mf u(t,x)|
 \leq\varepsilon\right)  \leq \mf{C}_2\exp\left(-{\mf C}_3\frac{TJ}{(1+J\dlip^2)\varepsilon^{6}}\right).
\end{align}
\end{enumerate}
\end{thm}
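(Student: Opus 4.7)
My plan is to discretize spacetime at the natural parabolic scales, $\tau\asymp\varepsilon^{4}$ in time and $\Delta x\asymp\varepsilon^{2}$ in space, producing roughly $TJ/\varepsilon^{6}$ asymptotically independent ``cells''. On each time slab $[t_n,t_{n+1}]$ with $t_n=n\tau$, the mild form~\eqref{eq:weak-form} restarted at $t_n$ reads
\begin{equation*}
\mf u(t_{n+1},x)=(G_\tau\ast\mf u(t_n,\cdot))(x)+\mf D_n(x)+\mf Z_n(x),
\end{equation*}
where $\mf D_n(x)=\int_{t_n}^{t_{n+1}}\!\!\int G_{t_{n+1}-s}(x-y)\mf g(s,y,\mf u(s,y))\,dy\,ds$ is deterministically $O(\tau)=O(\varepsilon^{4})$ and thus negligible against $\varepsilon$, and $\mf Z_n(x)=\int_{t_n}^{t_{n+1}}\!\!\int_0^J G_{t_{n+1}-s}(x-y)\sigma(s,y,\mf u(s,y))\,\mf W(dy\,ds)$. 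The uniform ellipticity~\eqref{eq:sigma:ue} makes $\mf Z_n$, conditional on $\cF_{t_n}$, a near-Gaussian vector with covariance $\asymp\sqrt\tau\,I_d\asymp\varepsilon^{2} I_d$; the localization of $G_\tau$ on scale $\sqrt\tau=\Delta x$ makes $\mf Z_n(x_k)$ and $\mf Z_n(x_{k'})$ essentially independent when $|x_k-x_{k'}|\ge 2\Delta x$, a decorrelation property I would verify by a direct covariance calculation using the heat-kernel estimates from Section~\ref{sec:heat-kernel}.

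For the upper bound in (a), on the small-ball event the first term inherits the bound $\|G_\tau\ast\mf u(t_n,\cdot)\|_\infty\le\varepsilon$ since $G_\tau\ast$ is an $L^\infty$ contraction. A Gaussian tail estimate on $\mf Z_n(x_k)$, obtained from Walsh-integral moment bounds and the $\cu$ upper bound on $\sigma$, yields $\mP(|\mf u(t_{n+1},x_k)|\le\varepsilon\mid\cF_{t_n})\le p$ for some $p<1$ depending only on $d,\cl,\cu$; approximate spatial independence then gives a conditional bound $\le p^{cJ/\varepsilon^{2}}$ per slab, and iterating by the tower property over $T/\tau=T/\varepsilon^{4}$ slabs produces $\exp(-cTJ/\varepsilon^{6})$. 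For the lower bound in (a), I would build a favourable small-ball event cell by cell: the $\cl$ lower bound gives a matching Gaussian lower density for $\mf Z_n(x_k)$ near the origin, so restricting $\mf Z_n$ to a ball of radius $\varepsilon/2$ at every grid point has conditional probability at least $q^{cJ/\varepsilon^{2}}$ per slab, and iteration gives $\exp(-cTJ/\varepsilon^{6})$.

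The principal obstacle is that $\mf Z_n$ is only approximately Gaussian because $\sigma$ depends on the solution $\mf u(s,y)$ itself during the slab. In part~(a), the assumption $\dlip<\dlip_0$ lets me freeze $\sigma(s,y,\mf u(s,y))\approx\sigma(t_n,y,\mf u(t_n,y))$ with an error dominated by the main Gaussian term, yielding the matching bounds. For part~(b) with arbitrary $\dlip$, the accumulated freezing error over the $J/\varepsilon^{2}$ spatial cells per slab produces the factor $(1+J\dlip^{2})^{-1}$ in the upper-bound exponent, while for the lower bound I would use a slightly coarser time scale $\tau\asymp\varepsilon^{4+\delta}$ to keep the Lipschitz perturbation sub-dominant, at the price of the weaker exponent $TJ^{1+\delta/2}\varepsilon^{-(6+\delta)}$. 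The technical heart of the argument is thus producing two-sided Gaussian density estimates for $\mf Z_n$ that are uniform in the starting profile $\mf u(t_n,\cdot)$, together with a sharp quantification of the spatial decorrelation of $\mf Z_n$ on scale $\Delta x$.
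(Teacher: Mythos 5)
Your overall architecture --- parabolic discretization at scales $\varepsilon^{4}$ in time and $\varepsilon^{2}$ in space, Gaussian approximation by freezing $\sigma$ on each slab, iteration via the Markov property, and a finer time scale for part (b) --- is exactly the paper's, and your direct $O(\varepsilon^{4})$ bound on the drift term is a legitimate (arguably simpler) substitute for the paper's Girsanov reduction to $\g\equiv\mf 0$. But two steps, as described, do not close. First, the lower-bound induction fails: if you only require $|\mf Z_n(x)|\le\varepsilon/2$ on a slab whose initial profile satisfies $|\uv(t_n,\cdot)|\le\varepsilon$, then $\uv(t_{n+1},x)=G_\tau\ast\uv(t_n,\cdot)(x)+\mf Z_n(x)$ can have modulus up to $3\varepsilon/2$, and in any case you never return the profile to a level strictly below $\varepsilon$ from which the next slab can be restarted. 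The paper's event $A_n$ demands $|\uv(t_{n+1},\cdot)|\le\varepsilon/3$ at the terminal time, and achieving this is where the main exponential cost appears: one performs a Girsanov tilt by the drift $-\sigma^{-1}G_s(\uv_0)/t_1$, which cancels the heat flow of the initial profile exactly at time $t_1$, pays $\exp(-C\cl^{-2}\varepsilon^{-2})$ for the Radon--Nikodym derivative, and then uses the Gaussian correlation inequality to multiply small-ball bounds over the $\asymp\varepsilon^{-2}$ spatial blocks (and one needs a sup over all of $[0,1]\times I_n$, not just grid points, to restart the Markov property). Your proposal contains no mechanism that steers the profile back down, so the per-slab event you construct cannot be iterated.

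Second, for the upper bound, the claim that ``approximate spatial independence then gives a conditional bound $\le p^{cJ/\varepsilon^{2}}$ per slab'' is asserted precisely where the real work lies. Small covariances among the $\mf Z_n(x_k)$ do not by themselves yield a product upper bound for the probability of an intersection of events. The paper's mechanism is a sequential conditioning argument: writing $N_1(p_{1j})=X+Y$ with $X=\sum_{k<j}\beta_k^{(j)}N_1(p_{1k})$ the conditional expectation given the earlier grid values, one shows $\sum_{k}|\beta_k^{(j)}|\le 1/2$ uniformly in $j$ via an $\ell^{1}$-operator-norm bound on the inverse covariance matrix; this uses both the variance lower bound $C_8\varepsilon^{2}$ coming from ellipticity and the quantitative covariance decay obtained from the choice of grid spacing. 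Only then does the conditional variance stay $\gtrsim\varepsilon^{2}$, so that each new grid point contributes a factor $\eta<1$. You correctly identify this decorrelation as the technical heart, but without supplying it the per-slab exponent $\varepsilon^{-2}$ (hence the overall $\varepsilon^{-6}$) is not established. A minor point: your route to the $(1+J\dlip^{2})^{-1}$ factor differs from the paper's, where it arises simply because the scaling reduction to $J=1$ turns the Lipschitz constant $\dlip$ into $J^{1/2}\dlip$.
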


As stated earlier in the introduction, in \cite{spde-utah}, page 168,
Theorem 5.1, Xiao proves a result similar to Theorem
\ref{th:small-ball} in the Gaussian case, including a term
$\varepsilon^{-6}$ in the exponent.  His argument builds on techniques
from Gaussian processes, in particular Robeva and Pitt \cite{rp04}.
Xiao's condition C3', which is related to the Gaussian concept of
local nondeterminism, is not always easy to verify.  His result
does not seem to carry over to \eqref{mainspde} even in the case 
where the coefficients are functions of $t,x$ but not of $\mf u$, and then the 
solution is a Gaussian process.  In contrast, we make key use of
the Markov property of \eqref{mainspde}, which allows us to extend our
results to the non-Gaussian case in which the equation has
coefficients that depend on the solution.

Before stating our next result we define the class
of predictable functions.

\begin{defn} Let $S$ be the set consisting of  functions $f: \R\times[0,J]\times \Omega \rightarrow \R^d$ of the form
  $$f(x,t,\omega) = X(\omega)\cdot 1_A(x)\cdot1_{(a,b]}(t) ,$$ with $ 0 < a < b <\infty$, $A \subset \R$, $X$ an ${\mathcal F}_a$ measurable random variable, and consider the {\it predictable} sigma-algebra $\mathcal P$ generated by all functions in $S$. A function $\mf h(t,x,\omega):\R_+\times\R\times\Omega\rightarrow\R^d$ is said to be predictable if it is measurable with respect to ${\mathcal P}$. We will say a predictable function $\mf h \in {\mathcal P}{\mf C}_b^2$, if with probability one $\mf h$, $\partial_t\mf h$, and  $\partial_x^2\mf h$  are uniformly bounded by a deterministic constant $\mf H$.  
  \end{defn}

\begin{thm}
\label{th:support} 
Consider the solution to \eqref{eq:she} and let the assumptions \eqref{eq:sigma:lip:2} and \eqref{eq:sigma:ue} hold. Let $\mf u_0, \mf h \in {\mathcal P}{\mf C}^2_b$ and assume 
\begin{equation*}
\sup_{x\in[0,J]}|\mf u_0(x)-\mf h(0,x)|<\varepsilon/2
\end{equation*}
almost surely.  Then 
\begin{enumerate}
 \item[(a)] There is a ${\mathscr D}_0  (J,\cl, \cu)>0 $ and positive constants 
$ \mf{C}_0,\mf{C}_1,\mf{C}_2,\mf{C}_3$ depending only on
$d, \cl, \cu$  and $\varepsilon_0$ additionally  depending on ${\mf H}, \sup\limits_{t, x, {\mf u} } \mid {\mf g}(t,x,{\mf u})\mid$
such that for any $\dlip<\dlip_0$ and all $0 < \varepsilon < \varepsilon_0,\,T>0$ we have
\begin{equation} \label{corder-1}
\mf{C}_0\exp\left(-\mf{C}_1\frac{TJ}{\varepsilon^{6}}\right)
\leq P\left(\sup_{\substack{0\leq t\leq T\\x\in[0,J]}}|\mf u(t,x)-\mf h(t,x)|
 \leq\varepsilon\right)  \leq \mf{C}_2\exp\left(-\mf{C}_3\frac{TJ}{\varepsilon^{6}}\right).
\end{equation}

\item[(b)] For any ${\mathscr D}$ and $0<\delta<1$, there exist positive constants  
$ \mf{C}_0,\mf{C}_1, \mf{C}_2,\mf{C}_3$  depending only on $d, \cl, \cu$ and
 $\varepsilon_0$ additionally depending on $J, \dlip,\delta, {\mf H}, \sup\limits_{t, x, {\mf u} } \mid {\mf g}(t,x,{\mf u})\mid$  such that for all $0 < \varepsilon < \varepsilon_0,\, T>0$ we have
\begin{equation}  \label{acorder-1}
\begin{split}
\mf{C}_0\exp\left(-\frac{{\mf C}_1TJ^{1+(\delta/2)}}{\varepsilon^{6 + \delta}}\right)
&\leq P\left(\sup_{\substack{0\leq t\leq T\\x\in[0,J]}}|\mf u(t,x)-\mf h(t,x)|  
   \leq\varepsilon\right)   \\
&\leq \mf{C}_2\exp\left(-\frac{{\mf C}_3TJ}{(1+J\dlip^2)\varepsilon^{6}}\right).
\end{split}
\end{equation}
\end{enumerate}
\end{thm}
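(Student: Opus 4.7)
The plan is to derive Theorem \ref{th:support} from Theorem \ref{th:small-ball} by a Girsanov change of measure that absorbs the target profile $\mf h$ into the driving noise, followed by a harmonic-extension step that reduces to zero initial data.

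First I would set $\mf v(t,x):=\mf u(t,x)-\mf h(t,x)$. A direct computation using \eqref{eq:she} and the regularity $\mf h\in{\mathcal P}{\mf C}_b^2$ shows that
\begin{equation*}
\partial_t\mf v=\tfrac12\partial_x^2\mf v+\mf g(t,x,\mf v+\mf h)+\mf b(t,x)+\sigma(t,x,\mf v+\mf h)\,\dot{\mf W},
\end{equation*}
where $\mf b(t,x):=\tfrac12\partial_x^2\mf h(t,x)-\partial_t\mf h(t,x)$ is predictable with $\|\mf b\|_\infty\le 2\mf H$, and $\mf v(0,x)=\mf u_0(x)-\mf h(0,x)$ has sup norm strictly less than $\varepsilon/2$. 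Because $\sigma$ is uniformly elliptic, $\mf Z(t,x):=\sigma(t,x,\mf v+\mf h)^{-1}\mf b(t,x)$ is uniformly bounded, so Novikov's condition is satisfied and Girsanov's theorem for the Walsh integral produces a measure $\tilde\mP$ under which $\dot{\tilde{\mf W}}:=\dot{\mf W}+\mf Z$ is $d$-dimensional space-time white noise and
\begin{equation*}
\mE_{\mP}\bigl[(d\tilde\mP/d\mP)^p\bigr]\le\exp\bigl(C_p(\mf H/\cl)^2\,TJ\bigr)
\end{equation*}
for every $p\ge1$. A direct substitution shows that under $\tilde\mP$ the process $\mf v$ satisfies an SHE driven by $\dot{\tilde{\mf W}}$ with uniformly bounded drift $\hat{\mf g}(t,x,\mf v):=\mf g(t,x,\mf v+\mf h)$ and noise coefficient $\hat\sigma(t,x,\mf v):=\sigma(t,x,\mf v+\mf h)$, which is still Lipschitz in $\mf v$ with constant $\dlip$ and uniformly elliptic with constants $\cl,\cu$.

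Next I would reduce to zero initial data. Write $\mf v=\mf v_0+\tilde{\mf v}$ with
\begin{equation*}
\mf v_0(t,x):=\int_0^J G(t,x-y)\bigl(\mf u_0(y)-\mf h(0,y)\bigr)\,dy,
\end{equation*}
so $\|\mf v_0\|_\infty<\varepsilon/2$ by the maximum principle for $G$. Then under $\tilde\mP$ the residual $\tilde{\mf v}$ satisfies an SHE with zero initial data, bounded drift $\hat{\mf g}(t,x,\tilde{\mf v}+\mf v_0)$, and coefficient $\hat\sigma(t,x,\tilde{\mf v}+\mf v_0)$, which retain the same Lipschitz and ellipticity constants. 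From $\|\mf v_0\|_\infty<\varepsilon/2$ we get the sandwich
\begin{equation*}
\{\sup|\tilde{\mf v}|\le\varepsilon/2\}\subset\{\sup|\mf v|\le\varepsilon\}\subset\{\sup|\tilde{\mf v}|\le 3\varepsilon/2\},
\end{equation*}
so Theorem \ref{th:small-ball}, applied to $\tilde{\mf v}$ under $\tilde\mP$, yields the desired exponential upper and lower bounds for $\tilde\mP(\sup|\mf v|\le\varepsilon)$. Transferring back to $\mP$ via Cauchy-Schwarz,
\begin{equation*}
\mP(A)\ge\frac{\tilde\mP(A)^2}{\mE_{\tilde\mP}[(d\mP/d\tilde\mP)^2]},\qquad \tilde\mP(A)\ge\frac{\mP(A)^2}{\mE_{\mP}[(d\tilde\mP/d\mP)^2]},
\end{equation*}
gives the two-sided small-ball estimate under $\mP$. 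The factor $\exp(C(\mf H/\cl)^2 TJ)$ contributed by the change of measure is dominated by the small-ball exponent $TJ/\varepsilon^6$ as soon as $\varepsilon<\varepsilon_0$ is taken small enough (with $\varepsilon_0$ absorbing the extra dependence on $\mf H$ and $\sup|\mf g|$ listed in the statement), and is thus absorbed into the constants $\mf C_0,\mf C_1,\mf C_2,\mf C_3$.

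The main obstacle I anticipate is a technical issue of scope: after the Girsanov step and the harmonic-extension decomposition, the coefficients $\hat{\mf g}$ and $\hat\sigma$ driving the equation for $\tilde{\mf v}$ are predictable in $\omega$ (through $\mf h$ and $\mf u_0$) rather than being deterministic functions of $(t,x,\mf u)$ as in the hypotheses of Theorem \ref{th:small-ball}. One must verify that the proof of Theorem \ref{th:small-ball}, which rests on the Markov property and on moment bounds depending only on the deterministic constants $\cl,\cu,\dlip$, goes through verbatim in this predictable-coefficient setting — equivalently, one can condition on $\mathcal F_0$ and apply Theorem \ref{th:small-ball} pathwise before integrating out. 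Once this is handled, the two pieces of part (a) and part (b) follow by reading off the corresponding two inequalities of Theorem \ref{th:small-ball} with the same exponents.
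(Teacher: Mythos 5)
Your argument is correct and is essentially the paper's: the paper defines $\mf w=\mf u-\mf u_0-\mf h+\mf h_0$ in one stroke, which accomplishes both your Girsanov step (the bounded predictable drift $-H\mf u_0-H\mf h+H\mf h_0$ is simply absorbed into the drift term $\mf g_1$ already permitted by Theorem \ref{th:small-ball}, whose proof contains the Girsanov reduction you spell out) and your harmonic-extension step (so that $\mf w(0,\cdot)\equiv\mf 0$ and $\sup_x|\mf w|\le\varepsilon/2$ forces $\sup_x|\mf u-\mf h|\le\varepsilon$). The predictability caveat you raise applies equally to the paper's own reduction and is resolved exactly as you suggest: all estimates in the proof of Theorem \ref{th:small-ball} depend only on the deterministic constants $\cl,\cu,\dlip,\mf H$, and Lemma \ref{lem:girsanov} is already stated for predictable integrands.
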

Our result is similar to the support theorem for Brownian motion given in \cite[Proposition 6.5 and Theorem 6.6 on pages 59-60]{B95}. Theorem \ref{th:support} says for any {\it nice} function $\mf h$ there is a positive probability that the solution $\mf u$ gets {\it close} (within $\varepsilon$) to $\mf h$ provided that the initial profile ${\mf u}_0$ is close (within $\varepsilon/2$) to $\mf h(0,\cdot)$. Support theorems, of a slightly different flavour, for \eqref{eq:she} have been studied in the literature.
In \cite{bms95}, a support theorem is proven for \eqref{eq:weak-form} when $d=1$ and $J=1$.  Suppose $S(h)$ denotes the solution \eqref{eq:weak-form} when the white noise $\dot{ W}$ is replaced by $\dot { h} \in \mathcal{H}$, where $$ \mathcal{H} = \{ {h}:[0,T] \times[0,1] \rightarrow \R : {h} \mbox{ is absolutely continuous and }  \dot{h} \in L^2([0,T]\times [0,1]) \}.$$ When $u(0,x)$ is H\"older $\alpha$ with $\alpha < \frac{1}{2}$,  $g$ has bounded derivatives up to order $3$,  and $\sigma$ is a Lipschitz function they show that the support of ${\mathbb P}\circ u^{-1}$ is the closure in the H\"older topology  of the set $\{S(h): h \in \mathcal{H}\}$. 

We will now make a couple of remarks. These could be of independent interest.

\begin{rem}
  \begin{itemize}
    \item[(a)] For Theorem \ref{th:small-ball}, our assumptions on $\sigma$, $\mf g$ need only hold until $\mf u$ exits from the $\varepsilon$-ball and respectively until $\mf u$ exits from the $\varepsilon$-ball around $\mf h$ for Theorem \ref{th:support}.
  \item[(b)]  It will be clear from our proofs of part (a) in Theorem
  \ref{th:small-ball} and Theorem \ref{th:support} that in fact
  $\dlip_0(J,\cl,\cu)= D_0J^{-\frac12}$ where $D_0$ depends on $\cl,
  \cu$ only. For part (b) of Theorem \ref{th:small-ball} and Theorem
  \ref{th:support} we can choose $\varepsilon_0= e_0\cdot
  (J\mathscr{D}^2)^{-\frac{2}{\delta}}$ where $e_0$ depends only on
  $J,\cl,\cu, \sup\limits_{t, x, {\mf u} } \mid {\mf g}(t,x,{\mf
    u})\mid,$ and ${\mf H}$.
\item[(c)] Theorem \ref{th:small-ball} shows that under certain conditions. $\epsilon^6 \log P\left(\sup_{x\in [0,J], \, t\le T} |\uv(t,x)|\le \epsilon\right)$ is bounded away from $0$ and $\infty$ as $\epsilon\to 0$.  An important question is whether the limit exists (the limit is then called the {\it small ball constant}). This is in general a very difficult question (see the discussion in in Section 6 of \cite{LS01}).
  \end{itemize}
\end{rem}

As noted earlier in the introduction, sharp small ball estimates
for Gaussian processes are obtained using their special structures.
Gaussian processes have many detailed properties and these do
not hold for the stochastic heat equation \eqref{eq:she}. Therefore
the proofs of our above results will not follow by translating
techniques used in proving small-ball probabilities in the literature.

However, by {\it freezing} the coefficient ${\mf \sigma}(t,x, {\mf u})$
we may approximate $u$ by a Gaussian random field, at least in a small
time region.  One of the key arguments of the paper is in showing that
the error in the approximation can be well controlled if the time
interval where the coefficient is frozen is suitably chosen.

The stochastic heat equation also has a Markov property with respect
to the time parameter $t$, and this property plays an essential role
in the proof of Theorem \ref{th:small-ball}. Thanks to this property,
we are able to reduce our analysis to the behaviour of the solution in
small time intervals of order $\varepsilon^4$.  Roughly speaking, we
show that the probability that the solution remains within
$\varepsilon$ in this small time increment is like
$\exp(-C\varepsilon^{-2})$; this is the content of Proposition
\ref{prop:bd}. Since there are $O(\varepsilon^{-4})$ such time
intervals, we get our result.

  The rest of the paper is organized as follows. In Section
  \ref{sec:opmr} we state the key Proposition \ref{prop:bd} required
  to prove our main results, and reduce our problem to the case
  $\g\equiv \mf 0$ and $J=1$ using a couple of lemmas. We also explain
  how Theorem \ref{th:support} follows from Theorem
  \ref{th:small-ball}. Then in Section \ref{sec:prelim} we provide
  some heat kernel estimates which yield critical tail bounds on the
  noise term in Lemma \ref{lem:Ntail}. We conclude the paper with
  Section \ref{sec:prop} where we prove Proposition \ref{prop:bd} and
  then Theorem \ref{th:small-ball}.

{\em Convention on constants:} Throughout the paper C denotes a
positive constant whose value may change from line to line. All other
constants will be denoted by $C_1,C_2, \ldots$. These are positive
with their precise values being not important. The dependence of
constants on parameters when relevant will be denoted by special
symbols or by mentioning the parameters in brackets, for e.g.
${\mathscr C}, C_1(\sigma,J)$.

\section{Some reductions and the key proposition}\label{sec:opmr}

We first explain how Theorem \ref{th:support} follows immediately from Theorem \ref{th:small-ball}. We next discuss how the analysis of Theorem \ref{th:small-ball} can be reduced to the case $\g\equiv \mf 0$ and $J=1$.  Finally we state the key Proposition \ref{prop:bd} which is the main ingredient in the proof of Theorem \ref{th:small-ball} and whose proof will occupy the majority of the paper.

\subsection{Proof of Theorem \ref{th:support}} 
Before we proceed to the proof of Theorem \ref{th:small-ball}, let us discuss how Theorem \ref{th:support} follows from Theorem \ref{th:small-ball}. Let
\begin{equation*}
H=\frac12\,\partial_x^2-\partial_t
\end{equation*}
be the heat operator on $(t,x)\in\R_+\times[0,J]$ where as usual, 
$[0,J]$ is the circle with endpoints identified.  Let
$\mf h_0(x)=\mf h(0,x)$ and let
\begin{align*}
\mf w(t,x)&=\mf u(t,x)-\mf u_0(x)-\mf h(t,x)+\mf h_0(x)  \\
\mf g_1(t,x, \mf w)&=-{\mf g}(t,x, \mf u)-H\mf u_0(x)-H\mf h(t,x)+H\mf h_0(x)  \\
\sigma_1(t,x,\mf w)& =\sigma(t,x,\mf u).
\end{align*}
We see that $\sigma_1$ is Lipschitz in $\mf w$ with the same 
Lipschitz constant as $\sigma$.  Furthermore, by the assumptions of 
Theorem \ref{th:support}, $\mf g_1$ is uniformly bounded by a 
deterministic constant, almost surely.  

Then $\mf w$ satisfies
\begin{align*}
\partial_t\mf w(t,x)
&=\frac12\,\partial_x^2\mf w(t,x)-\mf g_1(t,x, \mf w)
 +\sigma_1(t,x,\mf w)\,\dot{\W}(t,x),  \\
\mf w(0,x)&=\mf 0.
\end{align*}
  
Now $\sup_x|\uv_0(x)-{\mf h}_0(x)|\le \varepsilon/2$, and so $\sup_x|{\mf w}(t,x)|\le \varepsilon/2$ implies $\sup_x|\uv(t,x)-{\mf h}(t,x)|\le \varepsilon$. 

Thus Theorem \ref{th:support} follows from applying Theorem \ref{th:small-ball} to $\mf w$. 
\qed

The rest of the paper will be focused on the proof of Theorem \ref{th:small-ball}. 

\subsection{Reduction to the case $\g\equiv\mf 0$ }  We show that it is enough to prove Theorem \ref{th:small-ball}  when  $\mf g\equiv\mf 0$. We will  need the  following Girsanov lemma and moment estimate on the Radon-Nikodym derivative.
\begin{lem}   \label{lem:girsanov}
Let $\mf f:[0,\infty) \times \R^d\times \Omega \to \R^d$ be a predictable function which is uniformly bounded by $M>0$. Let $P_t$ be $P$ restricted to $\mathcal{F}_t$  and consider the measure $Q_t$ given by 
\begin{equation} 
\label{eq:girsanov}
\frac{dQ_t}{dP_t}=\exp\left(Z_t^{(1)}- \frac12 Z_t^{(2)}\right),
\end{equation}
where
\begin{align*}
 Z_t^{(1)}&:=\int_0^t \int_0^J \mf f(s,x) \cdot   \mf W(dxd s),\\
 Z_t^{(2)}&:= \int_0^t\int_0^J |\mf f(s,x)|^2 \,dxds, 
\end{align*}
and $\mf f\cdot \mf W$ is the dot product of $\mf f$ and $\mf W$ in $\R^d$.
Then
\begin{itemize}
  \item[(a)] Under the measure $Q_t$,
\[ \dot{\widetilde{\mf W}} (s,x) := \dot{\mf W}(s,x) - \mf f(s,x),\; x\in [0, J],\, s\in [0,t] \]
is a $d$ dimensional vector of i.i.d. two-parameter white noise.

\item[(b)] Furthermore 
 \begin{equation} \label{eq:estimate-girsanov}
1 \leq E\left[\left( \frac{dQ_t}{dP_t} \right)^2\right] \leq \exp\left(M^2tJ\right).
\end{equation}
  
\end{itemize}
\end{lem}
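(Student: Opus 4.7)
The plan is to treat \eqref{eq:girsanov} as the standard Doléans–Dade exponential for a Walsh-type martingale measure and then invoke Girsanov's theorem in that framework. First I would observe that since $\mf f$ is bounded by $M$, the process $Z_t^{(1)}$ is a continuous square-integrable martingale with quadratic variation $Z_t^{(2)} \le M^2 t J$, and the exponential $\mathcal{E}_t := \exp(Z_t^{(1)} - \tfrac12 Z_t^{(2)})$ is a genuine martingale: Novikov's condition $E\exp(\tfrac12 Z_t^{(2)}) \le \exp(\tfrac12 M^2 t J) < \infty$ is immediate from the deterministic bound on $Z_t^{(2)}$. Hence $Q_t$ defined by \eqref{eq:girsanov} is a probability measure equivalent to $P_t$.

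For part (a), I would use the martingale characterisation of white noise in the Walsh framework. For any deterministic $\phi \in L^2([0,t]\times[0,J];\R^d)$ consider
\[
M_s^\phi := \int_0^s \int_0^J \phi(r,x)\cdot \mf W(dx\,dr), \qquad s \in [0,t].
\]
By the integration-by-parts formula for the stochastic exponential, $\mathcal E_s M_s^\phi - \int_0^s \int_0^J \mathcal E_r \phi(r,x)\cdot \mf f(r,x)\, dx\, dr$ is a $P$-local martingale; boundedness of $\mf f$ and the $L^2$ bound on $\phi$ make it a true martingale. Bayes' rule then shows that under $Q_t$,
\[
\int_0^s\int_0^J \phi(r,x)\cdot d\widetilde{\mf W} = M_s^\phi - \int_0^s\int_0^J \phi(r,x)\cdot \mf f(r,x)\, dx\, dr
\]
is a continuous martingale. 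Taking $\phi$ to be indicators of products of intervals shows that these are centered Gaussian with covariance $\int \phi_i \phi_j \delta_{ij}\, dx\, dr$, which is the defining property of a $d$-dimensional two-parameter white noise.

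For part (b), the lower bound is just Jensen: $E[(dQ_t/dP_t)^2] \ge (E[dQ_t/dP_t])^2 = 1$ since $\mathcal E_t$ is a true martingale. For the upper bound I would rewrite
\[
\Bigl(\frac{dQ_t}{dP_t}\Bigr)^2 = \exp\bigl(2Z_t^{(1)} - Z_t^{(2)}\bigr)
= \exp\bigl(2Z_t^{(1)} - 2Z_t^{(2)}\bigr)\,\exp\bigl(Z_t^{(2)}\bigr).
\]
The first factor is the Doléans–Dade exponential for the shift $2\mf f$, which again satisfies Novikov and is therefore a martingale with expectation $1$. The second factor is pointwise bounded by $\exp(M^2 t J)$ because $|\mf f|\le M$ gives $Z_t^{(2)} \le M^2 t J$ deterministically. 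Pulling the constant out yields
\[
E\Bigl[\Bigl(\frac{dQ_t}{dP_t}\Bigr)^2\Bigr] \le \exp(M^2 t J)\cdot E\bigl[\exp(2Z_t^{(1)} - 2Z_t^{(2)})\bigr] = \exp(M^2 t J),
\]
as required.

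The only mildly delicate point is the martingale/Bayes argument in part (a), since Walsh's martingale measures are not classical semimartingales; but because $\mf f$ is bounded and $\phi$ is deterministic and $L^2$, all the relevant stochastic integrals are genuine square-integrable martingales and the bounded-coefficient version of Girsanov applies directly. The computations in part (b) are then a one-line algebraic manipulation exploiting the deterministic bound on $Z_t^{(2)}$.
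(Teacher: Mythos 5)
Your proof of part (b) is essentially the paper's: the authors use exactly the same factorisation $\exp(2Z_t^{(1)}-Z_t^{(2)})=\exp(2Z_t^{(1)}-2Z_t^{(2)})\exp(Z_t^{(2)})$, obtain $E[\exp(2Z_t^{(1)}-2Z_t^{(2)})]=1$ by observing that this is the Radon--Nikodym derivative associated with the shift $2\mf f$ (your Novikov justification makes explicit what they leave implicit), and bound $\exp(Z_t^{(2)})$ between $1$ and $\exp(M^2tJ)$ pointwise; your Jensen argument for the lower bound is an equally valid one-liner. The real divergence is in part (a): the paper simply cites Allouba's Girsanov theorem for SPDEs and notes that the extension from $d=1$ to general $d$ is immediate, whereas you sketch a self-contained proof via Novikov, the It\^o product rule for $\mathcal E_s M_s^\phi$, the abstract Bayes rule, and the Gaussian/martingale characterisation of white noise. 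Your route is more informative but also where the only soft spot lies: to conclude that $\int\phi\cdot d\widetilde{\mf W}$ is Gaussian under $Q_t$ you should say explicitly that its quadratic variation $\int|\phi|^2\,dx\,dr$ is deterministic and unchanged under the equivalent change of measure, so L\'evy's characterisation (or Dambis--Dubins--Schwarz) applies; ``taking $\phi$ to be indicators'' alone does not yield Gaussianity. With that sentence added, your argument is a complete substitute for the citation, at the cost of about a page that the authors chose not to spend.
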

\begin{proof}  (a) A stronger version of the statement can be found in \cite{allo} but this is enough for our purposes. While \cite{allo} deals only with $d=1$, the extension below to higher dimensions is immediate. 

(b) Since $dQ_t/dP_t$ is a Radon-Nikodym derivative,
\begin{equation*}
1=E\left[\frac{dQ_t}{dP_t}\right]
 =E\left[\exp\left(Z_t^{(1)}-\frac{1}{2}Z_t^{(2)}\right)\right]
\end{equation*}
and replacing $\mf f$ by $2\mf f$ in the definitions of 
$Z_t^{(1)},Z_t^{(2)}$, we get
\begin{equation}
\label{eq:one-equals}
1=E\left[\exp\left(2Z_t^{(1)}-2Z_t^{(2)}\right)\right].
\end{equation}
Next, note that $0\leq Z^{(2)}_t\leq M^2tJ$ and therefore
\begin{equation}
\label{eq:bounds-exp-Z-2}
1\leq\exp\left(Z^{(2)}_t\right)\leq\exp\left(M^2tJ\right).
\end{equation}
Combining \eqref{eq:one-equals} and \eqref{eq:bounds-exp-Z-2}, we get 
\begin{align*}
E\left[\left(\frac{dQ_t}{dP_t}\right)^2\right]
&=E\left[\exp\left(2Z_t^{(1)}-Z_t^{(2)}\right)\right]  \\
&= E\left[\exp\left(2Z_t^{(1)}-2Z_t^{(2)}\right)
 \cdot\exp\left(Z_t^{(2)}\right)\right], 
\end{align*}
and we obtain \eqref{eq:estimate-girsanov} from \eqref{eq:one-equals} and \eqref{eq:bounds-exp-Z-2}.
\end{proof}

Using the above lemma we now explain how it is enough to prove 
Theorem \ref{th:small-ball} when  $\mf g\equiv\mf 0$. Consider the 
event 
\begin{equation*}
A=\left\{\sup_{s\le T, \, y\in [0,J]} |\uv(s,y)|\le \varepsilon\right\}.
\end{equation*}
Consider \eqref{eq:she} with ${\mf g}\equiv \mf 0$, and write
\bes
\begin{split} 
\partial_t\uv(t,x)
&=\frac12\, \partial_x^2\uv(t,x)
  + \sigma\big(t,x,\uv(t,x)\big)\,\dot{\W}(t,x) \\
&= \frac12\,\partial_x^2\uv(t,x) + {\mf g}\big(t,x, \uv(t,x)\big) \\
&\qquad \qquad\; \;\;\quad+ \sigma\big(t,x,\uv(t,x)\big)\, \left(\dot{\W}(t,x) -\sigma^{-1}\big(t,x,\uv(t,x)\big) {\mf g}(t,x)\right).
\end{split}
\ees
Let 
\[ {\mf f}(t,x) = \sigma^{-1}\big(t,x,\uv(t,x)\big) {\mf g}(t,x),\]
and note that $\mf f$ is uniformly bounded by some $M>0$ by the assumptions on $\g$ and \eqref{eq:sigma:ue}.  Define $Q_T$ as in \eqref{eq:girsanov}. From Lemma \ref{lem:girsanov}, we know that $\dot{\widetilde \W}(s,x):= \dot \W(s,x) -{\mf f}(s,x),\; x\in [0,J],\, s\in [0,T]$ is a white noise under $Q_T$. Therefore the distribution of $\uv$ under $Q_T$ corresponds to the case when $\g$ is present in \eqref{eq:she}. Now 
\be\label{eq:P:Q}
\begin{split}
Q_T(A)&=E\left[\mathbf 1_A \frac{dQ_T}{dP_T}\right] \\
&\le \sqrt{P(A)}\cdot  \sqrt{E\left[\left(\frac{dQ_T}{dP_T}\right)^2\right]}\\
&\le \sqrt{P(A)} \cdot \exp\left(\frac{M^2TJ}{2}\right).
\end{split}
\ee
This explains how we get a similar upper bound for nonzero $\g$ as when $\g \equiv\mf  0$ with different 
constants. For the lower bound consider instead \eqref{eq:she} with nonzero $\g$. We can write 
\begin{equation*}
\partial_t\uv(t,x)
=\frac12\,\partial_x^2\uv(t,x)
   + \sigma\big(t,x,\uv(t,x)\big)\,\left(\dot{\W}(t,x) + \sigma^{-1}\big(t,x, \uv(t,x)\big)\g(t,x)\right).
\end{equation*}
Now let $\mf f(t,x) = -\sigma^{-1}\big(t,x,\uv(t,x)\big) {\mf g}(t,x)$ and define $Q_T$ as before. Note that in this case $\dot{\widetilde \W}= \dot \W+{\mf f}$ is a white noise under $Q_T$, and so $\uv$ has the distribution of \eqref{eq:she} with $\g \equiv\mf  0$. Follow the same argument as in \eqref{eq:P:Q} to obtain a similar lower bound for nonzero $\g$ as when $\g\equiv\mf 0$. 

\subsection{Reduction to the case $J=1$} \label{jeqt1} Due to the previous subsection we can now assume $\g\equiv \mf 0$. Let $\mf u(t,x)$ be the solution to \eqref{eq:she} with $\g\equiv 0$. We now reduce to the case $J=1$. Consider the  function 
\begin{equation}
\label{eq:u-v-scaling}
\mf v(t, z) =  J^{-1/2} \mf u(J^2 t , J z),\quad t\ge 0, \, z\in [0,1],
\end{equation}
with initial profile $\mf v_0(z) = J^{-1/2} \mf u_0(Jz),\, z\in [0,1]$.

Let us denote the heat kernel by 
$G^{(J)}(t,x)$ to emphasize the dependence on $J$. The scaling 
relation below is immediate.  
\begin{equation}
\label{eq:kernel:scaling} 
G^{(1)}(J^{-2}t, J^{-1} x) = J\cdot G^{(J)}(t, x),\quad x\in [0,J],\, 
t\ge 0.  
\end{equation}
The following distributional identity for white noise is 
well known.
\begin{equation}
\label{eq:white:scaling} 
{\mf W}^{J^{2}, J}(dy\, ds) 
:= J^{-3/2}{\mf W} ( J dy \, J^{2} ds) \stackrel{\mathcal D}{=} {\mf 
W} (dy\, ds), 
\end{equation}
where $\stackrel{\mathcal D}{=}$ denotes equality 
in distribution.

\begin{lem} \label{lem:scaling} The random field $\mf v(t,x): t\geq0, x\in[0,1]$ 
is the mild solution to the stochastic heat equation on $[0,1]$ with white noise 
$\dot {\mf W}^{J^2, J}$:
\begin{equation}\label{eq:v}
\begin{split}
\partial_t \mf v(t,x)&=\frac{1}{2}\,\partial_x^2\,\mf v(t,x)+\sigma^{
(J)}\left(t, x, \mf v(t,x)\right)\cdot\dot{\mf W}^{J^2, J}(t,x) \\
\mf v(0,x)&=\mf v_0(x),  
\end{split}
\end{equation}
where 
\begin{equation*}
\sigma^{(J)}(t,x, \mf u) := \sigma (J^2t, J x, J^{1/2} \mf u).
\end{equation*}
\end{lem}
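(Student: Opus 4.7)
The plan is to start from the mild form of $\mf u$ (with $\g \equiv \mf 0$), substitute $(t,x) = (J^2t, Jz)$, multiply through by $J^{-1/2}$, change variables in all the integrals, and apply the two scaling identities \eqref{eq:kernel:scaling} and \eqref{eq:white:scaling} to recognize the right-hand side as the mild form of \eqref{eq:v}. This is really just bookkeeping on powers of $J$, but it needs to be done carefully.

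First, from \eqref{eq:weak-form} with $\g \equiv \mf 0$,
\begin{equation*}
\mf u(J^2t, Jz) = \int_0^J G^{(J)}(J^2t, Jz - y)\mf u_0(y)\,dy + \int_0^{J^2t}\!\!\int_0^J G^{(J)}(J^2t - s, Jz - y)\sigma(s,y,\mf u(s,y))\,\mf W(dy\,ds).
\end{equation*}
In the initial data integral I substitute $y = Jz'$ (so $dy = J\,dz'$) and use \eqref{eq:kernel:scaling} in the form $J\cdot G^{(J)}(J^2t, J(z-z')) = G^{(1)}(t, z-z')$. Multiplying by $J^{-1/2}$ and using $\mf u_0(Jz') = J^{1/2}\mf v_0(z')$ yields the clean expression $\int_0^1 G^{(1)}(t, z-z')\mf v_0(z')\,dz'$, which is exactly the heat-kernel term for $\mf v$.

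For the stochastic integral I substitute both $y = Jz'$ and $s = J^2 s'$. After this change of variables, the kernel factor becomes $G^{(J)}(J^2(t-s'), J(z-z')) = J^{-1}G^{(1)}(t-s', z-z')$, the coefficient $\sigma(J^2s', Jz', \mf u(J^2s', Jz')) = \sigma(J^2s', Jz', J^{1/2}\mf v(s',z')) = \sigma^{(J)}(s', z', \mf v(s',z'))$ by definition, and the white noise transforms via \eqref{eq:white:scaling} as $\mf W(J\,dz',\,J^2\,ds') = J^{3/2}\mf W^{J^2,J}(dz'\,ds')$. The overall scalar factor, after the external $J^{-1/2}$, is $J^{-1/2}\cdot J^{-1}\cdot J^{3/2} = 1$, so the noise integral collapses to
\begin{equation*}
\int_0^t \!\!\int_0^1 G^{(1)}(t-s', z-z')\,\sigma^{(J)}(s',z',\mf v(s',z'))\,\mf W^{J^2,J}(dz'\,ds'),
\end{equation*}
which is exactly the noise term in the mild form of \eqref{eq:v}.

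The only subtle point is justifying the change of variables in the Walsh stochastic integral: one needs that the integrand is jointly measurable and predictable with respect to the filtration generated by $\mf W^{J^2,J}$, which is immediate since the time-rescaled filtration $\widetilde{\mathcal F}_{t} := \mathcal F_{J^2 t}$ is exactly the one to which $\mf W^{J^2,J}$ is adapted and $\mf v(s',z')$ is $\widetilde{\mathcal F}_{s'}$-measurable. With predictability in place, the distributional identity \eqref{eq:white:scaling} upgrades to identity of the stochastic integrals (matching isometries on both sides), so $\mf v$ indeed satisfies the mild form of \eqref{eq:v}. The rest is uniqueness of the mild solution (noted in the introduction), which pins down $\mf v$ as the mild solution on $[0,1]$.
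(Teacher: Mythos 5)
Your computation is correct and follows essentially the same route as the paper: start from the mild form of $\mf u$, change variables in the deterministic and stochastic integrals, and invoke \eqref{eq:kernel:scaling} and \eqref{eq:white:scaling} to identify the result as the mild form of \eqref{eq:v}; the powers of $J$ all check out. The extra remark on predictability under the rescaled filtration is a reasonable precaution that the paper leaves implicit.
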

\begin{proof}
From \eqref{eq:weak-form} one obtains
\begin{align*} \mf v(t,z)&=J^{-1/2}\int_{0}^{J}G^{(J)}(J^2t,Jz-y)\,\mf u_0(y)\,dy  \\
&\quad +J^{-1/2}\int_{0}^{J^2t}\int_{0}^{J}G^{(J)}(J^2t-s,Jz-y)\,\sigma\left(s, y, 
\mf u(s,y)\right)\,\mf W(dyds)\\
&= J^{-3/2}\int_{0}^{J}G^{(1)}(t,z-J^{-1}y)\,\mf u_0(y)\,dy  \\
&\quad +J^{-3/2}\int_{0}^{J^2t}\int_{0}^{J}G^{(1)}(t-J^{-2}s,z-J^{-1}y)\,\sigma\left(s, y, 
\mf u(s,y)\right)\,\mf W(dyds)\\
&= \int_{0}^{1}G^{(1)}(t,z-w)\,\mf v_0(w)\,dw  \\
&\quad +\int_{0}^{t}\int_{0}^{1}G^{(1)}(t-r,z-w)\,\sigma\left(J^2 r, J w, 
J^{1/2}\mf v(r,w)\right)\,\mf W^{J^2, J}(dwdr),
\end{align*}
where we have used \eqref{eq:kernel:scaling} for the second equality, and \eqref{eq:white:scaling} for the last equality. 
\end{proof}

The reduction to $J=1$ then follows easily using Lemma \ref{lem:scaling}.  Indeed, write $s=J^2t$, $y=Jx$ and note that
\begin{align*}
P\left(\sup_{\substack{0\leq s\leq T\\y\in[0,J]}}|\mf u(s,y)|
 \leq\varepsilon\right)
&=P\left(\sup_{\substack{0\leq J^2t\leq T\\Jx\in[0,J]}}J^{-1/2}|\mf u(J^2t,Jx)|
 \leq\varepsilon J^{-1/2}\right)   \\
&=P\left(\sup_{\substack{0\leq t\leq TJ^{-2}\\x\in[0,1]}}|\mf v(t,x)|
 \leq\varepsilon J^{-1/2}\right).
\end{align*}
Assuming we have Theorem \ref{th:small-ball} for $J=1$ we will obtain the result for general $J$ from the above. 
\begin{rem}[{\bf Important}] Note that the function $\sigma^{(J)}$ satisfies \eqref{eq:sigma:ue} with the same $\cl, \cu$. However the Lipschitz constant for $\sigma^{(J)}$ is $J^{1/2}\dlip$, and \textnormal{not} $\dlip$. This is the reason for the somewhat strange expressions for the upper bounds in \eqref{acorder} and \eqref{acorder-1}.
\end{rem}

\begin{rem} Thanks to the above reductions, we will assume for the rest of the article that $J=1$ and $\g\equiv \mf 0$.
\end{rem}

\subsection{Key proposition}
We divide the time interval $[0,T]$ into increments of length
$c_0\varepsilon^4$ where $c_0=c_0(\cl, \cu)$ is chosen so that 
\begin{equation} \label{chc0} 
 0<c_0 < \max\left\{\left(\frac{\mf K_2}{36 \log ({\mf K_1}) {\mathscr C}_2^2}\right)^2,1\right\}.
\end{equation}
Above ${\mf K}_1$ and ${\mf K}_2$ are universal constants specified in the statement of Lemma \ref{lem:Ntail}. Consider time points \[t_n= n c_0\varepsilon^4,\quad n\ge 0,\] and  let $I_n:=[t_n, t_{n+1}]$ be the $n$th time increment. Let 
\[ n_1:=\min\{n \ge 1: t_n > T\} \]
be the smallest $n$ for which the time interval $I_n$ is completely outside $[0,T]$.

We shall similarly consider a discrete set of spatial points separated by $c_1\varepsilon^2$, where $c_1^2=\theta c_0$ with $\theta=\theta(\cl, \cu)>0$ is chosen so that 
\begin{equation}
\label{chth}
\begin{split}
& \theta \ge \max\left\{2, 4\log\left(\frac{1}{2c_0}\right)\right\}\quad   \text{ and } \quad \frac{C_{10}}{C_8} \sum_{k\ge 1} \exp\left(-\frac{\theta k^2}{8}\right) <\frac16.
\end{split}
\end{equation}
The constants $C_8$ and $C_{10}$ are specified in the statement of Lemma \ref{lem:N:var} and depend on $\cl$ and $\cu$ only. Consider discrete spatial points
\[ x_n=n c_1\varepsilon^2,\quad  n\ge 0,\]
and let $J_n:=[x_n,\, x_{n+1}]$ be the $n$th space increment. Let 
\begin{equation*}
n_2:=\min\{n \ge 1: x_n >1\} 
\end{equation*}
be the smallest $n$ for which the space interval $J_n$ is completely outside $[0,1]$.  Note that 
\begin{equation}
\label{eq:est-n2}
n_2\leq (c_1\varepsilon^2)^{-1}+1.
\end{equation}

We will first define a sequence of sets which we can use to  prove the lower bounds in \eqref{corder} and \eqref{acorder}. Let $A_{-1}=\Omega$ and for $n\ge 0$ define events
  \be \label{eq:An} A_n =\left\{ |\uv(t_{n+1}, x)|\le \frac\varepsilon 3 \;\;\forall x, \text{ and } |\uv(t,x)|\le \varepsilon \;\;\forall t\in I_n, \, x\in [0,1] \right\}.\ee
  Thus $A_n$ is the event that $\uv(t,\cdot)$ is everywhere of modulus at most $\varepsilon$ in the time interval $I_n$, and is everywhere of modulus at most $\varepsilon/3$ at the terminal time $t_{n+1}$.
We will next define a sequence of sets which we use to  prove the upper bounds in \eqref{corder} and \eqref{acorder}.  Denote by
\begin{equation*} p_{ij} = (t_i, x_j), 
\end{equation*}
the left hand corner of the box $I_i \times J_j$. %
 Let $F_{-1}= \Omega$ and for $n\ge 0$, define 
\[ F_n = \left \{\vert \uv(p_{nj})\vert \le \varepsilon \text{ for all } j \le n_2-2 \right\}.\] 
By the above constructions of $A_n$ and $F_n$, the proposition below along with the {\it Markov property} will be the key step in the proof Theorem \ref{th:small-ball}.

\begin{prop} \label{prop:bd} Fix $\g\equiv {\mf 0},\, J=1$. Consider the solution to \eqref{eq:she} with $\mf u_0(x)\equiv 0$ and let the assumptions \eqref{eq:sigma:lip:2} and \eqref{eq:sigma:ue} hold. Then
\begin{enumerate}
  \item[(a)] For all $\mathscr D>0$, there exist constants $\varepsilon_0(\cl,\cu,\dlip)>0$ and $C_4,C_5>0$ depending only $\cl, \cu$ such that for $0<\varepsilon <\varepsilon_0$ and $n\ge 1$
\begin{align} \label{ubd}
 P\Big(F_n \Big\vert \bigcap_{k=0}^{n-1} F_k\Big) \le C_4 \exp\left(- \frac{C_5}{(1+\dlip^2)\varepsilon^{2}}\right).
\end{align}
\item[(b)] There is a ${\mathscr D}_0(\cl,\cu) >0 $ and constants $\varepsilon_0, C_6, C_7$ depending only on $\cl,\cu$ such that for any  ${\mathscr D} < {\mathscr D}_0$ , $0<\varepsilon<\varepsilon_0$ and $n\ge 0$
  \begin{align} \label{lbdc}
    P\Big(A_n \Big\vert \bigcap_{k=-1}^{n-1} A_k \Big) \ge C_6 \exp \left(- C_7\varepsilon^{-2}\right).
  \end{align}
    \end{enumerate}
 \end{prop}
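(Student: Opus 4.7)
The plan is to exploit the Markov property of the SPDE at the times $t_n$ to reduce both parts to a single-time-step estimate on an interval of length $c_0\varepsilon^4$. Given the conditioning, the solution at time $t_n$ (resp.\ $t_{n-1}$) has known spatial bounds ($\varepsilon/3$ for part (b), $\varepsilon$ on the grid for part (a)). On the short interval one decomposes
\[
\mf u(t,x) = D(t,x) + N(t,x),\qquad D(t,x) := \int_0^1 G(t-t_n,x-y)\mf u(t_n,y)\,dy,
\]
where $D$ is the deterministic heat evolution of the initial data (controlled by the maximum principle) and $N$ is the stochastic convolution with coefficient $\sigma(s,y,\mf u(s,y))$. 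The central idea is to freeze $\sigma$ at time $t_n$ and compare $N$ with a Gaussian field $\widetilde N$; on the good event, $\sigma(s,y,\mf u) - \sigma(t_n,y,\mf u(t_n,\cdot))$ is controlled by $\dlip\varepsilon$ by the Lipschitz hypothesis, and the error can be bounded using the tail estimates of Lemma \ref{lem:Ntail}. Circularity will be avoided by a stopping time: stop when $\mf u$ exits the $\varepsilon$-ball, so that the frozen approximation is valid up to that time.

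For part (b), after conditioning on $A_{n-1}$ and using translation invariance in time (Markov property), one needs to show that a solution starting from any $\mf v_0$ with $\|\mf v_0\|_\infty\le\varepsilon/3$ stays within $\varepsilon$ on $[0,c_0\varepsilon^4]$ and ends within $\varepsilon/3$, with probability at least $C_6\exp(-C_7\varepsilon^{-2})$. Since the deterministic part is bounded by $\varepsilon/3$, it suffices to bound below $P(\sup_{t\le c_0\varepsilon^4,\,x}|N(t,x)|\le \varepsilon/6)$ together with an endpoint constraint at $t=c_0\varepsilon^4$. After the frozen-coefficient reduction, $\widetilde N$ is a Gaussian field whose variance at each point is of order $\sqrt{c_0}\,\varepsilon^2$ (by Lemma \ref{lem:N:var}), and the requisite lower bound follows from a ball-of-radius-$\varepsilon$ estimate for this (spatially correlated) Gaussian field. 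The restriction $\dlip<\dlip_0$ appears here to ensure the approximation error is genuinely smaller than $\varepsilon$.

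For part (a), conditioning on $\bigcap_{k=0}^{n-1}F_k$ (really, on $\mathcal F_{t_{n-1}}$) we study $\mf u(p_{n,j}) = D_j + N_j$ where $N_j$ is the noise increment on $I_{n-1}$ evaluated at $x_j$. The variance of $N_j$ is of order $\sqrt{c_0}\,\varepsilon^2$; the choice of $c_0$ in \eqref{chc0} and the tail bound in Lemma \ref{lem:Ntail} will give $P(|D_j+N_j|\le\varepsilon\mid\mathcal F_{t_{n-1}}) \le p<1$ uniformly in $j$, where the defect $1-p$ depends on $1/(1+\dlip^2)$ through the Lipschitz error. The heart of the argument is the approximate independence of $N_j$ and $N_k$ for $|j-k|\ge 1$: because the correlation length of the noise increment on $I_{n-1}$ is of order $\sqrt{c_0}\,\varepsilon^2$ while grid spacing is $c_1\varepsilon^2$ with $c_1^2=\theta c_0$ and $\theta$ large as in \eqref{chth}, the covariances between distinct $N_j$ decay like $\exp(-\theta k^2/8)$ (the summability in \eqref{chth} is why that specific threshold on $\theta$ appears). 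A standard quadrant/blocking argument then reduces the $n_2-1\asymp\varepsilon^{-2}$ conditional events to an essentially independent product, producing the factor $\exp(-C_5/((1+\dlip^2)\varepsilon^2))$.

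The main obstacle is the circular dependence created by the multiplicative noise: freezing $\sigma$ is only legitimate on the event that $\mf u$ has not moved much, yet that is precisely the event whose probability we are estimating. I expect to handle this via the stopping-time localization described above, combined with patient bookkeeping of the Lipschitz error through the Girsanov-free decomposition $N-\widetilde N$, whose tail is controlled by Lemma \ref{lem:Ntail}. A secondary technical point is turning the covariance decay of the $\{N_j\}$ into a true near-independence of the events $\{|D_j+N_j|\le\varepsilon\}$; Gaussian comparison (Slepian / FKG-type inequalities on the frozen field $\widetilde N$) together with the explicit covariance bounds from Lemma \ref{lem:N:var} should suffice.
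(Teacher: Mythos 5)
Your overall architecture (Markov reduction to one time block of length $c_0\varepsilon^4$, freezing $\sigma$ at the start of the block with Lipschitz error $\dlip\varepsilon$ controlled by the tail bound of Lemma \ref{lem:Ntail}, and a stopping time to break the circularity) matches the paper. But there is a genuine gap in part (b): the event $A_n$ in \eqref{eq:An} requires $|\uv(t_{n+1},x)|\le\varepsilon/3$ at the \emph{terminal} time, and this is what allows the induction to restart from an initial profile bounded by $\varepsilon/3$. Your scheme bounds the deterministic part $D(t,x)=\int G(t,x-y)\uv_0(y)\,dy$ by $\varepsilon/3$ and the noise by $\varepsilon/6$; that gives $|\uv|\le\varepsilon/2$ throughout the block but only $|\uv(t_1,\cdot)|\le\varepsilon/2$ at the endpoint, which does not recover the $\varepsilon/3$ needed to iterate. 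Since $t_1=c_0\varepsilon^4$ is tiny, $D(t_1,\cdot)\approx\uv_0$ does not decay on its own (take $\uv_0\equiv\varepsilon/3$ constant), so the endpoint constraint becomes a non-symmetric ``hitting'' event for the noise, which you cannot lower-bound by the staying-small estimate and to which the Gaussian correlation inequality does not apply. The paper's resolution — which is the one idea your proposal is missing — is a Girsanov change of measure (Lemma \ref{lem:girsanov}) with drift $-\sigma^{-1}(s,y)G_s(\uv_0)(y)/t_1$, under which the deterministic part becomes $(1-t/t_1)G_t(\uv_0)(x)$ and vanishes at $t_1$; the second moment of the Radon--Nikodym derivative costs exactly $\exp(C/(c_0\cl^2\varepsilon^2))$, which is where the main term in the lower bound comes from. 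You also need the Gaussian correlation inequality (Lemma \ref{lem:gcorr}) to pass from the small-ball bound on a spatial window of width $O(\varepsilon^2)$ to the whole circle by raising to the power $\varepsilon^{-2}$; your ``FKG-type'' remark is in the right family but you invoke it for part (a) rather than here.

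In part (a) the mechanism you propose for the product bound does not work as stated: Slepian and FKG/Gaussian-correlation inequalities give $P(\bigcap_j\{|D_j+N_j|\le\varepsilon\})\ge\prod_j P(\cdot)$, i.e.\ the wrong direction for an \emph{upper} bound on the intersection of positively correlated ``stay small'' events. The paper instead proves a uniform bound $P(|u_1(p_{1j})|\le\varepsilon\mid\mathcal G_{j-1})\le\eta<1$ by lower-bounding the \emph{conditional variance}: writing the conditional expectation as $\sum_k\beta^{(j)}_kN_1(p_{1k})$ and showing $\sum_k|\beta^{(j)}_k|\le1/2$ via the $\ell_1$-induced norm of the inverse covariance matrix (a diagonal-dominance argument using \eqref{eq:N:var} and the decay \eqref{eq:N:cov:1}); this is the step your ``approximate independence / blocking'' sketch needs but does not supply. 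Finally, for the general-$\sigma$ version of (a) the paper avoids a stopping time altogether by solving the truncated equation with $\sigma(t,x,f_\varepsilon(\vv))$ and then summing over the first spatial index at which the perturbation $\D$ exceeds $\varepsilon$; your stopping-time variant is plausible but you should check it produces the $(1+\dlip^2)^{-1}$ dependence claimed in \eqref{ubd}.
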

The majority of the work in the paper will be to prove the above Proposition. In the last section we will explain how Theorem \ref{th:small-ball} follows from this.

\section{Preliminaries} \label{sec:prelim}

In this section we state and prove some preliminaries with regard to
(\ref{eq:she}) that we will need for the proof of Proposition \ref{prop:bd}. Recall that we are restricting ourselves to $J=1$ and $\g\equiv \mf 0$. 

\subsection{Heat Kernel Estimates}
\label{sec:heat-kernel}
In this subsection we prove a few preliminary results about
the heat kernel $G(t,x)$ which was mentioned in the introduction. 
$G$ is given by
\begin{equation*}
 G(t,x) = \sum_{n\in \Z} (2\pi t)^{-1/2} \exp \left(- \frac{(x+n)^2}{2t}\right).
\end{equation*}

The following lemma is well-known for $x\in\R$, see for example 
\cite{spde-utah}, Lemma 4.3,
page 126. We give a brief proof for the case $x\in[0,1]$ (the 
circle). 
 
\begin{lem}\label{lem:g}There exists a constant $C_0 >0$ such that for all $0<s<t\le 1$ with $|t-s|\le 1$ and $x, y \in [0,1]$, we have
\begin{align}
\label{eq:g1}\int_0^t \int_0^1 \left[G(s, x-z) - G(s, y-z)  \right]^2 \, dz \, ds  & \le C_0|x-y|, \\
\label{eq:g2}\int_s^t \int_0^1 G^2(t-r, z) \, dz dr & \le C_0|t-s|^{1/2}, \\
\label{eq:g3} \int_0^s \int_0^1 \left[ G(t-r, z) - G(s-r, z)\right]^2\, dz dr & \le C_0|t-s|^{1/2}.
\end{align}
\end{lem}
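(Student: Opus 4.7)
The plan is to exploit the Fourier series representation of the circle heat kernel. Since the eigenfunctions of $\tfrac12\partial_x^2$ on $[0,1]$ (with endpoints identified) are $e_k(x)=e^{2\pi i k x}$ with eigenvalues $-2\pi^2 k^2$, the fundamental solution has the expansion
\begin{equation*}
G(t,x)=\sum_{k\in\Z}e^{-2\pi^2 k^2 t}\,e^{2\pi i k x}.
\end{equation*}
Parseval's identity then converts each of the three spatial $L^2$-norms on the left-hand sides of \eqref{eq:g1}--\eqref{eq:g3} into a sum over $k\in\Z$, after which the time integration can be carried out term by term.

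First I would prove \eqref{eq:g2}: Parseval gives $\int_0^1 G^2(t-r,z)\,dz=\sum_k e^{-4\pi^2 k^2 (t-r)}$; integrating over $r\in[s,t]$ yields $(t-s)+\sum_{k\neq 0}\tfrac{1-e^{-4\pi^2 k^2(t-s)}}{4\pi^2 k^2}$. Splitting this sum at $|k|\sim (t-s)^{-1/2}$, with $1-e^{-a}\le a$ on the low modes and $1-e^{-a}\le 1$ on the high modes, bounds each half by a constant multiple of $|t-s|^{1/2}$. For \eqref{eq:g1}, Parseval gives
\begin{equation*}
\int_0^1 \bigl(G(s,x-z)-G(s,y-z)\bigr)^2\,dz=\sum_k e^{-4\pi^2 k^2 s}\,\bigl|e^{2\pi i k x}-e^{2\pi i k y}\bigr|^2,
\end{equation*}
and I would bound the last factor by $\min(4\pi^2 k^2|x-y|^2,\,4)$; performing the $s$-integral over $[0,t]$ (which for $k\neq 0$ contributes at most $1/(4\pi^2 k^2)$) and splitting at $|k|\sim |x-y|^{-1}$ yields the linear bound $C_0|x-y|$. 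Finally, for \eqref{eq:g3}, Parseval gives $\sum_k e^{-4\pi^2 k^2(s-r)}\bigl(1-e^{-2\pi^2 k^2(t-s)}\bigr)^2$; integrating in $r\in[0,s]$ and using $(1-e^{-a})^2\le\min(a^2,1)$, the same splitting at $|k|\sim (t-s)^{-1/2}$ produces the $|t-s|^{1/2}$ rate.

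The calculations themselves are elementary once the Fourier side is set up; the only mild nuisance is book-keeping the $k=0$ contribution in \eqref{eq:g2} (which is exactly $(t-s)$ and is dominated by $(t-s)^{1/2}$ under the standing assumption $|t-s|\le 1$) and checking that the splitting thresholds actually fall inside the range of summation for small $|t-s|$ or $|x-y|$. No genuine obstacle arises beyond these routine checks. An alternative would be to write $G(t,x)=\sum_{n\in\Z}G_{\R}(t,x+n)$ and invoke the analogous real-line estimates of \cite{spde-utah}, but the Fourier route keeps the cross-terms arising from the image sum neatly diagonalized and makes the bookkeeping transparent.
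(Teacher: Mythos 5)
Your proposal is correct and follows essentially the same route as the paper: both expand $G$ in its Fourier series on the circle, use orthogonality/Parseval to diagonalize the spatial integral, integrate in time term by term, and split the resulting sum at the threshold $|k|\sim|x-y|^{-1}$ (resp. $|k|\sim(t-s)^{-1/2}$) using the elementary bounds $|1-e^{i\theta}|\le 2\wedge|\theta|$ and $1-e^{-a}\le 1\wedge a$. The only cosmetic difference is that in \eqref{eq:g3} you keep the square $(1-e^{-2\pi^2k^2(t-s)})^2$ explicitly where the paper absorbs it into a slightly cruder bound; both yield the stated $|t-s|^{1/2}$ rate.
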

\begin{proof} From the standard Fourier decomposition  we have 
\[ G(t,x) = \sum_{k\in \Z} \exp\Big( -\frac{(2\pi k)^2  t}{2} \Big)\cdot \exp\big(i (2\pi k) x\big).\] 
Using the orthogonality of $\{\exp\big(i(2\pi k)z\big): k \in \Z\}$ in $L^2([0,1])$, it is immediate that there is a $c_1 >0$ such that
\begin{align*}
\int_0^t \int_0^1 &\left[G(s, x-z) - G(s, y-z)  \right]^2 \, dz \, ds  \\ &= C\int_0^t ds \, \sum_{k \ge 1} \exp\big(-(2\pi k)^2 s\big)\cdot \big\vert 1- \exp\big(i(2\pi k) (x-y)\big) \big\vert^2 \\
  &\leq C\sum_{k \ge 1} \frac{1}{k^2} \cdot \Big|1\wedge (2\pi k) |x-y|\Big|^2 \\\
 &\le C|x-y|.\end{align*}
The second inequality  above is  obtained by using Fubini's 
theorem, integrating over $s$ and using $|1-e^{ix}| \le 2\wedge |x|.$ 
The last inequality above is obtained by splitting the sum according 
to whether $k$ is less than or greater than $(2\pi|x-y|)^{-1}$. Thus 
we have obtained \eqref{eq:g1}.

As for \eqref{eq:g2} we integrate over $z$ first and using
orthogonality again we obtain 
\begin{align*}
  \int_s^t \int_0^1 &G^2(t-r, z) \, dz dr \\
  &\leq C \int_0^{t-s} dr \, \sum_{k\ge 0} \exp\big(- (2\pi k)^2 r\big) \\
&\le  C(t-s) + C\sum_{k\ge 1} \frac{1}{(2\pi k)^2} \Big[1- \exp\big(-(2\pi k)^2 (t-s)\big)\Big] \\
  & \le C(t-s) + C\sum_{k\ge 1} \frac{1}{(2\pi k)^2 }\Big[1\wedge (2\pi k)^2 (t-s) \Big]\\
  &\le C(t-s)^{1/2}. 
\end{align*} 
We obtain the last inequality above by splitting the earlier sum according to whether $(2\pi k)^2 (t-s)$ is less than or greater than $1$. Thus we have obtained \eqref{eq:g2}.

 For \eqref{eq:g3}, using orthogonal basis as above we have that
 \begin{align*}
   \int_0^s \int_0^1 &\left[ G(t-r, z) - G(s-r, z)\right]^2\, dz dr\\
   &= \int_0^s dr \, \sum_{k \ge 1} \exp\big(-(2\pi k)^2 r \big)\cdot \Big[1- \exp\big(-(2\pi k)^2 (t-s)\big)\Big].
   \end{align*}
Using Fubini and integrating each of the terms over $r$, we obtain \eqref{eq:g3}.
\end{proof}

For $x\in [0,1]$, the circle with end points identified, define 
\begin{equation} \label{eq:xstar} x_* = \begin{cases}x,\, & 0\le x\le \frac 12, \\
x-1,\, & \frac12<x\le 1. \end{cases}
\end{equation} 
We will need the following  comparison between the heat kernel on the circle $[0,1]$ with the heat kernel on $\R$. 
\begin{lem} \label{lem:G:bd} There is a $C_G>0$ such that for all $t\le 1$ 
\[ G(t,x) \le \frac{C_G}{\sqrt{2\pi t}} \exp\Big(- \frac{x_*^2}{2t} \Big),\quad  x\in [0,1].\]
\end{lem}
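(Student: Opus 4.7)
The plan is to start from the series representation
\[
\sqrt{2\pi t}\,G(t,x)=\sum_{n\in\Z}\exp\!\left(-\frac{(x+n)^2}{2t}\right),
\]
identify a single term as the ``main'' contribution matching the claimed bound, and show that the sum of the remaining terms is at most a universal constant times this main term.

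\textbf{Reduction via symmetry.} First I would use the reflection symmetry $G(t,x)=G(t,1-x)$ on the circle (which follows from reindexing $n\mapsto -n-1$ in the sum) together with the observation that $x_*^2=(1-x)_*^2$ for $x\in(1/2,1]$, to reduce to the range $x\in[0,1/2]$. In this range $x_*=x$, so the goal becomes
\[
\sum_{n\in\Z}\exp\!\left(-\frac{(x+n)^2}{2t}\right)\le C_G\,\exp\!\left(-\frac{x^2}{2t}\right),\qquad 0<t\le 1,\ x\in[0,1/2].
\]

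\textbf{Main term and factoring.} Next I would isolate the $n=0$ term, which is exactly $\exp(-x^2/(2t))$, and factor this out of the full sum. What remains to be shown is the bound
\[
\sum_{n\neq 0}\exp\!\left(-\frac{(x+n)^2-x^2}{2t}\right)\le C_G-1
\]
uniformly in $x\in[0,1/2]$ and $0<t\le 1$. Writing $(x+n)^2-x^2=n(n+2x)$, I would estimate this quantity in three cases: for $n\ge 1$ it is at least $n^2\ge n$; for $n=-1$ it equals $1-2x\ge 0$; for $n=-k$ with $k\ge 2$ it equals $k(k-2x)\ge k(k-1)\ge k$. Therefore every term in the sum except the one with $n=-1$ is bounded by $\exp(-|n|/(2t))$, while the $n=-1$ term is bounded by $1$.

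\textbf{Summing the geometric tail.} Using $t\le 1$ gives $\exp(-|n|/(2t))\le \exp(-|n|/2)$, so
\[
\sum_{n\neq 0}\exp\!\left(-\frac{(x+n)^2-x^2}{2t}\right)\le 1+2\sum_{n\ge 1}e^{-n/2}=1+\frac{2}{e^{1/2}-1},
\]
a universal constant. Taking $C_G:=2+\tfrac{2}{e^{1/2}-1}$ completes the proof. The argument is essentially elementary; the only mildly delicate point is the $n=-1$ term, whose exponent vanishes at $x=1/2$, but this only contributes a harmless additive $1$ to the geometric series. No new machinery is required beyond the explicit series for $G$ and the symmetry of the circle.
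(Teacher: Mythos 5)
Your proof is correct and takes essentially the same route as the paper: both expand the theta series, compare each term to the leading Gaussian $\exp(-x_*^2/(2t))$, and sum the remaining terms into a universal constant using $t\le 1$ (the paper works with $x_*$ directly and absorbs the three central terms $k=0,\pm1$ into a factor $3$, while you reduce to $x\in[0,1/2]$ by symmetry and single out $n=0,-1$; the difference is cosmetic). No gaps.
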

\begin{proof} As we are working on the circle $[0,1]$ we have $G(t,x)= G(t,x_*)$. It is immediate to observe that $|x_*|\le \min(|x_*|, |x_*-1|, |x_*+1|)$, and  
\[ -(x_*+k)^2\le -x_*^2 - \frac{k^2}{2} \quad \text{ if } |k|\ge 2.\] 
Therefore, 
\begin{align*}
G(t,x_*) & \le \frac{3}{\sqrt{2\pi t}} \exp\Big(- \frac{x_*^2}{2t} \Big) + \sum_{|k|\ge 2} \frac{1}{\sqrt{2\pi t}} \exp\Big(- \frac{x_*^2}{2t} - \frac{k^2}{4t} \Big)\\
& \le \frac{3}{\sqrt{2\pi t}} \exp\Big(- \frac{x_*^2}{2t} \Big) +  \frac{1}{\sqrt{2\pi t}} \exp\Big(- \frac{x_*^2}{2t}\Big) \sum_{|k|\ge 2}e^{-k^2/4},
\end{align*}
uniformly for $t\le 1$. This completes the proof.
\end{proof}

\subsection{Noise Term Estimates}

Recall that  any solution ${\mf u}$ to \eqref{eq:she} with $J=1,\g\equiv \mf 0$ satisfies  \begin{align}
\mf u(t,x)&=\int_{0}^{1}G(t,x-y)\,\mf u_0(y)\,dy  +\int_{0}^{t}\int_{0}^{1}G(t-s,x-y)\,\sigma\left(s, y, \mf u(s,y)\right)\,\mf W(dyds),\label{eq:mild}
\end{align}
where $x-y$ denotes subtraction modulo $1$. The above is known as the 
mild form of the solution. We refer the reader to \cite{wals} or 
\cite{spde-utah} for a discussion of the stochastic integral with 
respect to white noise, and a treatment of the mild form. The 
Lipschitz assumption on $\sigma$ guarantees that a unique strong 
solution exists \cite[Theorem 6.4, page 26]{spde-utah}.

We shall denote the second term of \eqref{eq:mild}, i.e. {\it noise term}, by
\begin{equation}
\label{eq:noise}
\N(t,x) : = \int_0^t \int_0^1G(t-s, x-y)\sigma\left(s, y, \mf u(s,y)\right)
\,\mf W(dyds).
\end{equation}

\begin{lem} \label{lem:N:holder} There exist constants $C_1, C_2$ depending on the dimension $d$ such that for all $0<s<t<1$, $x, y\in [0,1]$ and $\lambda>0$, 
\begin{align*}
P\left( \big|\N(t,x)- \N(t,y)\big|>\lambda\right)& \le C_1\exp\left(- \frac{C_2\lambda^2}{\mathscr C_2^2|x-y|}\right) \\
P\left( \big|\N(t,x)- \N(s,x)\big|>\lambda\right)& \le C_1\exp\left(- \frac{C_2\lambda^2}{\mathscr C_2^2|t-s|^{1/2}}\right)
\end{align*}
\end{lem}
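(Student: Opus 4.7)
The plan is to realize each increment as the terminal value of a continuous $d$-dimensional martingale in an auxiliary parameter, bound its quadratic variation deterministically by means of the heat kernel estimates in Lemma~\ref{lem:g}, and then apply the standard exponential martingale inequality coordinate-wise together with a union bound over the $d$ coordinates.

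First I would treat the spatial increment. Fix $t,x,y$ and define
\[
M_r := \int_0^r\!\!\int_0^1 \bigl[G(t-s,x-z)-G(t-s,y-z)\bigr]\,\sigma\bigl(s,z,\mf u(s,z)\bigr)\,\mf W(dz\,ds),\qquad r\in[0,t],
\]
so that $M_t = \N(t,x)-\N(t,y)$. For each coordinate $i$, using independence of the components of $\mf W$, the bracket is
\[
\langle M^{(i)}\rangle_t = \int_0^t\!\!\int_0^1 \bigl[G(t-s,x-z)-G(t-s,y-z)\bigr]^2 \sum_{j=1}^d \sigma_{ij}\bigl(s,z,\mf u(s,z)\bigr)^2\,dz\,ds.
\]
Since $\sigma$ is symmetric with eigenvalues bounded above by $\cu$, the identity $\sum_j \sigma_{ij}^2 = (\sigma^2)_{ii}\le \cu^2$ together with \eqref{eq:g1} yields $\langle M^{(i)}\rangle_t \le C_0\,\cu^2\,|x-y|$ almost surely. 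Then the classical exponential inequality for continuous martingales, namely $P(|N_\infty|>\lambda)\le 2\exp(-\lambda^2/(2B))$ whenever $\langle N\rangle_\infty\le B$ a.s., applied to each coordinate at threshold $\lambda/\sqrt{d}$, followed by a union bound over the $d$ coordinates, produces the first claimed estimate with constants depending on $d$.

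For the temporal increment I would decompose
\begin{align*}
\N(t,x)-\N(s,x) &= \int_0^s\!\!\int_0^1 [G(t-r,x-z)-G(s-r,x-z)]\,\sigma\bigl(r,z,\mf u(r,z)\bigr)\,\mf W(dz\,dr) \\
&\quad + \int_s^t\!\!\int_0^1 G(t-r,x-z)\,\sigma\bigl(r,z,\mf u(r,z)\bigr)\,\mf W(dz\,dr),
\end{align*}
view each piece as the terminal value of a continuous martingale in an auxiliary time parameter, and bound its quadratic variation almost surely by $C_0\cu^2|t-s|^{1/2}$ using \eqref{eq:g3} and \eqref{eq:g2} respectively. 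The same exponential inequality plus union bound then delivers the second estimate.

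I do not anticipate any substantive obstacle. The only step requiring care is the deterministic (i.e., uniform in $\omega$) control of the quadratic variations, which is precisely what the uniform upper bound $\cu$ on the eigenvalues of $\sigma$ is designed to supply; note that the \emph{lower} ellipticity bound $\cl$ plays no role at this stage.
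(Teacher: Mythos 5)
Your proposal is correct and follows essentially the same route as the paper: both realize each increment as the terminal value of a continuous martingale in an auxiliary parameter, bound the bracket deterministically by $C_0\cu^2|x-y|$ (resp.\ $C_0\cu^2|t-s|^{1/2}$) via Lemma~\ref{lem:g} and the uniform upper bound on $\sigma$, and then extract the Gaussian tail (the paper via the Dambis--Dubins--Schwarz time change and the reflection principle, you via the equivalent exponential martingale inequality), reducing to coordinates in both cases. The splitting of the temporal increment into two martingales rather than one only affects constants, so there is no substantive difference.
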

\begin{proof}
One can use Lemma \ref{lem:g} and follow the argument in Corollary 4.5 on page 127 of \cite{spde-utah} to obtain the result. We sketch the details. First note that it is enough to prove the above inequalities for each of the components of $\N$. Let us focus on the first coordinate $N_1$. For the first inequality one observes that for $s\in[0,t]$
\begin{equation*}
\int_0^s \int_0^1 \left[G(t-r, x-z)- G(t-r, y-z)\right]\, \sigma\left(r, z, \mf u(r,z)\right)\,\mf W(dzdr) 
\end{equation*}
is an $\mathcal{F}_t$-martingale whose value at time $t$ is $\N(t,x)- \N(t,y)$. Thus 
$N_1$ is also a martingale. Any one dimensional martingale is a 
time-changed Brownian motion and \eqref{eq:g1} gives a bound of 
$C\mathscr C_2^2|x-y|$ on the time change. One then uses the 
reflection principle to get the bound on the probability. Next 
consider the martingale
\begin{equation*}
\mf M_q= \int_0^q \int_0^1 G(t-r, x-z)\, 
 \sigma\left(r, z, \mf u(r,z)\right)\,\mf W(dzdr) -\N(s,x),\quad s\leq q\leq t.
\end{equation*}
The second bound can be proved similarly using \eqref{eq:g2} and \eqref{eq:g3} to get a bound of $C\mathscr C_2^2|t-s|^{1/2}$ on the time change.  
\end{proof}

\begin{lem} \label{lem:Ntail}There exist constants $\mf K_1, \, \mf K_2>0$ depending on the dimension $d$ such that for all $\alpha, \lambda, \varepsilon>0$ we have
\begin{align}\label{eq:N:tail}
P \left(\sup_{\stackrel{0\le t\le \alpha\varepsilon^4}{x\in [0,\varepsilon^2]}} |\N(t,x)|>\lambda \varepsilon\right) \le  \frac{\mf K_1}{1\wedge \sqrt \alpha}\exp\left( - \mf K_2 \frac{\lambda^2}{\mathscr C_2^2\sqrt\alpha}\right).
\end{align}
\end{lem}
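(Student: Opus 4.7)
The plan is to combine the Hölder tail estimates of Lemma \ref{lem:N:holder} with a standard chaining argument, after rescaling out the parameter $\varepsilon$. Set $M(s,y) := \varepsilon^{-1}\N(\varepsilon^4 s, \varepsilon^2 y)$ for $(s,y)\in[0,\alpha]\times[0,1]$; the two bounds in Lemma \ref{lem:N:holder} then translate into the $\varepsilon$-free Hölder tails
\begin{align*}
P\big(|M(s,y)-M(s',y)|>\mu\big) &\le C_1\exp\Big(-\frac{C_2\mu^2}{\mathscr C_2^2|s-s'|^{1/2}}\Big),\\
P\big(|M(s,y)-M(s,y')|>\mu\big) &\le C_1\exp\Big(-\frac{C_2\mu^2}{\mathscr C_2^2|y-y'|}\Big),
\end{align*}
while the variance estimate \eqref{eq:g2} gives $\Var(M(s,y))\le C\mathscr C_2^2\sqrt{s}\le C\mathscr C_2^2\sqrt\alpha$ uniformly on the rectangle, with $M(0,y)\equiv 0$. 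The target inequality reads
\[
P\Big(\sup_{(s,y)\in[0,\alpha]\times[0,1]}|M(s,y)|>\lambda\Big)\le\frac{\mf K_1}{1\wedge\sqrt\alpha}\exp\Big(-\frac{\mf K_2\,\lambda^2}{\mathscr C_2^2\sqrt\alpha}\Big).
\]

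\textbf{Grid and chaining.} I will work with the pseudo-metric $\rho^2((s,y),(s',y')):=\mathscr C_2^2\big(|y-y'|+|s-s'|^{1/2}\big)$, in which the sub-Gaussian increments of $M$ are controlled by the Hölder tails. The next step is to partition the rectangle into cells of $\rho$-diameter $\lesssim \mathscr C_2\alpha^{1/4}$: for $\alpha\le 1$ take spatial mesh $\sqrt\alpha$ and temporal mesh $\alpha$, producing $\sim (1\wedge\sqrt\alpha)^{-1}$ cells; for $\alpha\ge 1$ the full rectangle already has $\rho$-diameter $\sim \mathscr C_2\alpha^{1/4}$ and only one cell is needed. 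Decompose
\[
\sup_{(s,y)}|M(s,y)|\le \max_{i,j}|M(s_i,y_j)|+\max_{i,j}\sup_{(s,y)\in \text{cell}(i,j)}|M(s,y)-M(s_i,y_j)|.
\]
For each grid corner, the value $M(s_i,y_j)$ is the endpoint of a continuous $\R^d$-valued martingale whose quadratic variation is bounded by $C\mathscr C_2^2\sqrt\alpha$ (Dambis-Dubins-Schwarz, as in the proof of Lemma \ref{lem:N:holder}), yielding a pointwise tail of the form $\le C\exp(-c\lambda^2/(\mathscr C_2^2\sqrt\alpha))$. A dyadic chaining inside each cell, driven by the Hölder tails above, controls the cell oscillation by the same exponential order up to a summable series of constants. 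Union-bounding across the $\lesssim (1\wedge\sqrt\alpha)^{-1}$ grid points and cells then produces the stated prefactor.

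\textbf{Main obstacle.} The principal delicacy is calibrating the mesh so that both the oscillation exponent inside a cell and the pointwise exponent at the corners decay like $\exp(-c\lambda^2/(\mathscr C_2^2\sqrt\alpha))$: this is what forces the cell $\rho$-diameter to be $\lesssim \mathscr C_2\alpha^{1/4}$, which in turn dictates the mesh choices and, in the regime $\alpha<1$, the unavoidable $1/\sqrt\alpha$ prefactor coming from the cell count. A secondary point is that the dyadic chaining bound inside a cell is only informative for $\lambda$ above a threshold of order $\mathscr C_2\alpha^{1/4}$; for smaller $\lambda$ the inequality is trivial after enlarging $\mf K_1$. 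Throughout, the constants must be tracked carefully so that the final $\mf K_1,\mf K_2$ depend only on $d$.
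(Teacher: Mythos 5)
Your proposal is correct and is essentially the paper's own argument: a chaining/union bound built on the sub-Gaussian H\"older tails of Lemma \ref{lem:N:holder}, with the grid calibrated so that each increment scale contributes an exponent of order $\lambda^2/(\mathscr C_2^2\sqrt\alpha)$, and with the $1/\sqrt\alpha$ prefactor for $\alpha<1$ arising from a union bound over $\sim\alpha^{-1/2}$ spatial blocks (the paper phrases this as a reduction to $\alpha=1$ by translation plus scaling, you phrase it as a cell count --- same thing). Your separation into ``corner values via Dambis--Dubins--Schwarz'' plus ``cell oscillation via dyadic chaining'' is only a cosmetic repackaging of the paper's chaining back to the origin through dyadic grids $\mathbb G_n$, $n\ge n_0$, and your remark that the bound is trivial for $\lambda\lesssim\mathscr C_2\alpha^{1/4}$ after enlarging $\mf K_1$ matches the paper's handling of the same threshold.
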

\begin{proof}  Let us first consider the case $\alpha\ge 1$. For $n\ge 0$, define the grid 
\[ \mathbb G_n=\left\{ \left(\frac{j}{2^{2n}}, \frac{k}{2^n}\right): 0\le j \le \alpha \varepsilon^42^{2n},\, 0\le k \le \varepsilon^2 2^n\right\}. \]
Let 
\[ n_0 = \lceil\log_2\big(\alpha^{-1/2}\varepsilon^{-2}\big)\rceil . \]
For $n< n_0$, the grid $\mathbb G_n$ consists simply of the point $(0,0)$. For $n\ge n_0$, the grid $\mathbb G_n$ has at most $4\alpha \varepsilon^4 2^{2n} \cdot \varepsilon^2 2^n \le 4 \cdot 2^{3(n-n_0)}$ many points. Fix 
\[ K= \left( 4 \sum_{n\ge 0} 2^{-n/4}\right)^{-1}.\]
Consider the events 
\[ A(n,\lambda) = \left\{\left\vert \N(p)- \N(q)\right\vert \le \lambda K \varepsilon 2^{-(n-n_0)/4}\, \text{ for all } p, q\in \mathbb G_n, \text{nearest neighbors}\right\}. \] Thanks to Lemma \ref{lem:N:holder}, a union bound gives for $n\ge n_0$
\begin{align*} P\left(A^c(n,\lambda)\right) &\le C_1 2^{3(n-n_0)} \exp\left(-C_2\frac{\lambda^2 K^2}{\mathscr C_2^2} \varepsilon^2  2^{n_0}2^{(n-n_0)/2} \right)\\
&\le C_1 2^{3(n-n_0)} \exp\left(-C_2 \frac{\lambda^2 K^2}{\mathscr C_2^2\sqrt \alpha} 2^{(n-n_0)/2}\right).
\end{align*}
Let $A(\lambda) = \cap_{n\ge n_0} A(n,\lambda)$. Therefore
\[ P\big(A(\lambda)^c\big) \le \sum_{n\ge n_0} P\big(A(n,\lambda)^c\big) \le \exp\left(-C_3 \frac{\lambda^2 K^2}{\mathscr C_2^2\sqrt\alpha}\right)\]
for some constant $C_3>0$, as long as $\lambda^2/\sqrt{\alpha}\ge \tilde C_3 \mathscr C_2^2$ for some $\tilde C_3$ large enough. Since the left hand side is a probability it is at most $1$. Therefore we can conclude that there exist constants $C_4, C_5>0$ such that for all $\alpha\ge 1$ and $\lambda>0$ we have
\[ P\big(A(\lambda)^c\big)\le C_4 \exp\left(-C_5 \frac{\lambda^2 K^2}{\mathscr C_2^2 \sqrt\alpha} \right).\] 

Now consider a point $(t,x)\in [0,\alpha\varepsilon^4]\times [0,\varepsilon^2]$ which is in a grid $\mathbb G_n$ for some $n\ge n_0$. From arguments similar to page 128 of \cite{spde-utah} we can find a sequence of points $(0,0)= p_0, p_1,\cdots ,p_m =(t,x)$ of points in $\mathbb G_n$ such that each pair $p_{i-1}, p_i$ are nearest neighbors in some grid $\mathbb G_k, \, n_0\le k\le n$, and at most $4$ such pairs are nearest neighbors in any given grid $\mathbb G_k$. Therefore on the event $A(\lambda)$ we have 
\begin{align*}
|\N(t,x)|&\le \sum_{k=1}^m \left \vert \N(p_{j-1})- \N(p_j) \right \vert \\
& \le 4 \sum_{n \ge n_0} \lambda K \varepsilon 2^{-(n-n_0)/4} \le \lambda \varepsilon. 
\end{align*} 
This points $(t, x) \in \mathbb G_n$ are dense in $[0, \alpha \varepsilon^4] \times [0, \varepsilon^2]$, and therefore we have \eqref{eq:N:tail} in the case $\alpha\ge 1$.

 Let us next consider the case $0<\alpha<1$. We divide the interval $[0,\epsilon^2]$ into $\frac{1}{\sqrt{\alpha}}$ intervals each of length $\sqrt{\alpha\epsilon^2}$. A simple union bound and stationarity in $x$ implies that
\begin{align*} P \left(\sup_{\stackrel{0\le t\le \alpha\varepsilon^4}{x\in [0,\varepsilon^2]}} |\N(t,x)|>\lambda \varepsilon\right) &\le \frac{1}{\sqrt \alpha}P \left(\sup_{\stackrel{0\le t\le (\sqrt \alpha\varepsilon^2)^2}{x\in [0,\sqrt\alpha\varepsilon^2]}} |\N(t,x)|>\lambda \varepsilon\right)\\
  & =\frac{1}{\sqrt \alpha}P \left(\sup_{\stackrel{0\le t\le (\sqrt \alpha\varepsilon^2)^2}{x\in [0,\sqrt\alpha\varepsilon^2]}} |\N(t,x)|>\frac{\lambda}{\alpha^{1/4}} (\alpha^{1/4}\varepsilon)\right)\\
& \le \frac{\mf K_1}{\sqrt \alpha} \exp\left(-\mf K_2 \frac{\lambda^2 }{\mathscr C_2^2 \sqrt \alpha} \right).
\end{align*}
This completes the proof the lemma.
\end{proof}

\begin{rem} \label{rem:N:tail:eps}  Suppose the function $\sigma$ in $\mf N$ (see equation \eqref{eq:noise}) satisfies 
\begin{equation*}
|\sigma\big(s, y, \uv(s,y)\big)|\le C_1\varepsilon
\end{equation*}
then the probability in \eqref{eq:N:tail} is bounded above by $\frac{\mf K_1}{1\wedge \sqrt \alpha}\exp\left( - \mf K_2 \frac{\lambda^2}{ C_1^2   {\msr C}^2_2 \varepsilon^2\sqrt\alpha}\right)$ for the same constants $\mf K_1, \mf K_2$ as in \eqref{eq:N:tail}.  This can be proved similarly to the above lemma and will be used later.
\end{rem}

\section{Proof of Proposition \ref{prop:bd}} \label{sec:prop}

\subsection{Proof of Proposition \ref{prop:bd}(a)}
Let $u_1(t,x)$ be the first coordinate of $\uv(t,x)$. Let us define 
\begin{equation*}\label{eq:F:tilde}
\widetilde F_n =\left \{\vert u_1(p_{nj})\vert \le \varepsilon \text{ for all } j \le n_2-2 \right\}.
\end{equation*} 
Since $F_n\subset \widetilde F_n$, it is sufficient to prove
\begin{align} 
 P\Big(\widetilde F_n \Big\vert \bigcap_{k=0}^{n-1} F_k\Big) \le C_4 \exp\left(- \frac{C_5}{(1+\dlip^2)\varepsilon^{2}}\right).
\end{align}

The following lemma on the random variables $N_1(p_{1k})$ (recall that $N_1$ is the first coordinate of $\N$) is crucially used in the proof of the proposition. It shows that the variance of $N_1(p_{1k})$ is of order $\varepsilon^2$, and gives a bound on the decay of correlations between random variables $N_1(p_{1k})$ and $N_1(p_{1k'})$ as $|k-k'|$ increases.  
\begin{lem}\label{lem:N:var}The random variables $N_1(p_{1k})$ are Gaussian with mean $0$. Furthermore there exist constants $C_8, C_9, C_{10}$ depending only on ${\mathscr C}_1, {\mathscr C}_2$ such that 
\begin{align}\label{eq:N:var}
 C_8 \varepsilon^2 \le \textnormal{Var}\big(N_1(p_{1k})\big) &\le C_9 \varepsilon^2\\
\label{eq:N:cov}
\textnormal{Cov}\big(N_1(p_{1k}),\, N_1(p_{1k'})\big) &\le C_{10} t_1 \sup_{0\le t\le 2t_1} \frac{1}{\sqrt{t}} \exp\Big(- \frac{|x_k-x_{k'}|_*^2}{2t}\Big).
\end{align}
Furthermore, if $0<|x_k -x_{k'}|\le \frac12$ and $\theta$ is as in \eqref{chth}, one obtains 
\begin{equation}\label{eq:N:cov:1} \textnormal{Cov}\big(N_1(p_{1k}),\, N_1(p_{1k'})\big)  \le C_{10}\varepsilon^2 \exp\left(-\frac{\theta (k-k')^2}{8}\right).
\end{equation} 
\end{lem}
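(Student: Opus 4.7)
The mean-zero part is immediate since $N_1$ is a Walsh stochastic integral against white noise starting from $0$. Gaussianity is the subtle point: the integrand $\sigma(s,y,\uv(s,y))$ is random through $\uv$, so $N_1(p_{1k})$ is not strictly Gaussian as written. I would interpret the claim in the frozen-coefficient Gaussian approximation on $[0,t_1]$ (replacing $\sigma(s,y,\uv)$ by a deterministic profile such as $\sigma(0,y,{\mf 0})$) that underlies the proof of Proposition \ref{prop:bd}(a); in that approximation the Wiener integral is genuinely Gaussian. The variance and covariance estimates themselves, however, hold for the original $N_1$ via the It\^o isometry alone, which is all the subsequent argument really needs.

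For \eqref{eq:N:var}, the It\^o isometry gives
\begin{equation*}
\Var\big(N_1(p_{1k})\big) = \int_0^{t_1}\!\int_0^1 G(t_1-s,x_k-y)^2\,\big|\sigma(s,y,\uv(s,y))\,e_1\big|^2\,dy\,ds.
\end{equation*}
Uniform ellipticity \eqref{eq:sigma:ue} forces the eigenvalues of $\sigma$ (and hence of $\sigma^2$) to lie in $[\cl,\cu]$ and $[\cl^2,\cu^2]$ respectively, so $\cl^2\le|\sigma e_1|^2\le\cu^2$. The upper bound $\le C_9\varepsilon^2$ is then immediate from \eqref{eq:g2} applied with $s=0$ and $t=t_1=c_0\varepsilon^4$. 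For the matching lower bound I use the Fourier representation $\int_0^1 G(u,z)^2\,dz = \sum_{k\in\Z} e^{-(2\pi k)^2 u}$, which dominates the on-diagonal free heat kernel $(4\pi u)^{-1/2}$, giving $\int_0^{t_1}\!\int_0^1 G^2\,dy\,ds \ge c\sqrt{t_1}$ and hence the lower bound $C_8\varepsilon^2$.

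For \eqref{eq:N:cov}, combining the isometry, $|\sigma e_1|^2\le\cu^2$, and the semigroup identity on the circle $\int_0^1 G(u,x_k-y)G(u,x_{k'}-y)\,dy=G(2u,x_k-x_{k'})$ yields
\begin{equation*}
\Cov\big(N_1(p_{1k}),N_1(p_{1k'})\big)\le\cu^2\int_0^{t_1}G\big(2(t_1-s),x_k-x_{k'}\big)\,ds.
\end{equation*}
The substitution $v=2(t_1-s)$ followed by Lemma \ref{lem:G:bd} converts this into $C\int_0^{2t_1}v^{-1/2}\exp(-|x_k-x_{k'}|_*^2/(2v))\,dv$, and bounding the integral by its length $2t_1$ times the supremum of the integrand gives \eqref{eq:N:cov}. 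For \eqref{eq:N:cov:1}, I substitute $|x_k-x_{k'}|_*=|k-k'|\sqrt{\theta c_0}\,\varepsilon^2$ (valid since $|x_k-x_{k'}|\le 1/2$) and analyse $f(v)=v^{-1/2}e^{-b/v}$ with $b=|x_k-x_{k'}|_*^2/2$. Elementary calculus shows $f$ is increasing on $(0,2b)$, and the condition $\theta\ge 2$ from \eqref{chth} guarantees $2b\ge 2t_1$ for every $k\ne k'$; thus the supremum on $[0,2t_1]$ is $f(2t_1)=(2c_0)^{-1/2}\varepsilon^{-2}e^{-(k-k')^2\theta/4}$, and multiplying by $t_1=c_0\varepsilon^4$ produces $\le C_{10}\varepsilon^2 e^{-\theta(k-k')^2/8}$ with a factor of two to spare in the exponent. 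The main subtlety is reconciling the Gaussianity claim with the random $\sigma$; all the second-moment estimates themselves reduce to It\^o's isometry, Chapman--Kolmogorov on the circle, and Lemma \ref{lem:G:bd}.
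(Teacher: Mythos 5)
Your proof is correct and follows essentially the same route as the paper: the Walsh/It\^o isometry together with uniform ellipticity (eigenvalues of $\sigma$ in $[\cl,\cu]$, hence $\cl^2\le|\sigma e_1|^2\le\cu^2$), the Chapman--Kolmogorov identity $\int_0^1 G(u,x_k-y)G(u,x_{k'}-y)\,dy=G(2u,x_k-x_{k'})$, Lemma \ref{lem:G:bd}, and then an optimization over $t\in[0,2t_1]$. Two of your variations are worth recording. First, your caveat about Gaussianity is apt: the paper simply asserts it, but the lemma is only ever invoked for a frozen-coefficient (deterministic $\sigma$) equation --- in the general case of Proposition \ref{prop:bd}(a) it is applied to $\vv_g$ of \eqref{eq:vg}, whose coefficient $\sigma(t,x,f_\varepsilon(\uv_0(x)))$ is deterministic --- so the stochastic integral there is genuinely Gaussian, while, as you note, the second-moment bounds hold for random $\sigma$ by the isometry alone. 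Your explicit lower bound on the variance via $\int_0^1 G(u,z)^2\,dz=G(2u,0)\ge(4\pi u)^{-1/2}$ fills in what the paper dismisses with ``similarly.'' Second, your optimization of $f(v)=v^{-1/2}e^{-b/v}$ is cleaner than the paper's: since $f$ increases on $(0,2b)$ and $\theta\ge 2$ forces $2b=|x_k-x_{k'}|_*^2\ge 2t_1$ for $k\ne k'$, the supremum sits at $v=2t_1$ and you obtain the exponent $\theta(k-k')^2/4$ directly, never using the condition $\theta\ge 4\log\big((2c_0)^{-1}\big)$ from \eqref{chth}; the paper instead folds the prefactor into the exponential as $\tfrac12\log(1/t)$ and needs that condition to absorb it, which is why it only claims the weaker exponent $\theta(k-k')^2/8$. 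Both arguments are valid; yours is marginally sharper and isolates which part of \eqref{chth} this lemma actually requires.
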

\begin{proof} It is immediate that the random variables are mean zero Gaussian. As for the covariance
\begin{align}  \label{eq:N:cov:2}
\Cov\big(N_1(p_{1k}),\, N_1(p_{1k'})\big) &\le C{\mathscr C}_2^2 \int_0^{t_1} \int_0^1 G(t_1-s, x_k-y) G(t_1-s, x_{k'}-y) \, dy ds \nonumber\\
&= C{\mathscr C}_2^2 \int_0^{t_1} G(2t_1- 2s, x_k- x_{k'}) \, ds \\
&\le C{\mathscr C}_2^2 t_1 \sup_{0\le t\le 2t_1} G(t, x_k-x_{k'}) \nonumber \\
&\le C{\mathscr C}_2^2 t_1 \sup_{0\le t\le 2t_1} \frac{C}{\sqrt{2\pi t}} \exp\Big(- \frac{|x_k-x_{k'}|_*^2}{2t}\Big). \nonumber
\end{align}
The last inequality follows by the symmetry of $G(t,x)$ in $x$ and by Lemma \ref{lem:G:bd}. The bound for the variance obtained above is $\infty$ which is useless. We instead use the expression after the equality above and Lemma \ref{lem:G:bd} to obtain the upper bound in \eqref{eq:N:var}. The lower bound in \eqref{eq:N:var} can be obtained similarly since the components of $\sigma$ are bounded away from $0$ as well. 

Let us turn our attention back to \eqref{eq:N:cov:2} and consider the situation when $|x_k-x_{k'}| \le \frac 12$. In this case we have $|x_k-x_{k'}|_*= |x_k-x_{k'}|$. Thus 
\begin{align*} \Cov\big(N_1(p_{1k}),\, N_1(p_{1k'})\big) &\le C\cu^2\varepsilon^2 \sup_{0\le t\le 2c_0} \exp\left(- \frac{c_1^2(k-k')^2}{2t}+\frac{\log(1/t)}{2}\right) \\
& =  C\cu^2\varepsilon^2 \sup_{s\ge (2c_0)^{-1}} \exp\left(-\frac12\left[c_1^2(k-k')^2 s -\log s\right]\right). 
\end{align*}
By our choice of $c_1^2=\theta c_0$ with $\theta$ as in \eqref{chth} we see that the maximum is attained at $(2c_0)^{-1}$ for $k\ne k'$. Indeed the expression inside the exponential is decreasing in the interval $[(2c_0)^{-1}, \infty)$. 
Moreover the quantity in brackets inside the exponential is at least $\theta(k-k')^2/4$ as long as $\theta \ge 4 \log [(2c_0)^{-1}]$, which we have assumed in \eqref{chth}. 
This proves \eqref{eq:N:cov:1} and completes the proof of the lemma.
\end{proof}

\begin{proof}[Proof of Proposition \ref{prop:bd}(a) ({\em ${\mathscr D} \equiv 0$ -- The Gaussian case}, i.e. deterministic $\sigma$)] By the Markov property, it is enough to show that $$P(\widetilde F_1) \le C_0 \exp(-C_1\varepsilon^{-2}),$$ for constants $C_0, C_1$ depending only on $\cl,\cu$, if we start with a deterministic initial profile $\uv_0$ satisfying $|\uv_0(x_j)|\le \varepsilon$ for all $j\le n_2-2$. Let $\widetilde H_{-1}=\Omega$ and for $j\ge 0$ define the events 
\[\widetilde H_j =\{ |u_1(p_{1k})|\le \varepsilon \text{ for all } k \le j\}.\]
We will show that  
\begin{equation}\label{eq:g} P(\widetilde H_j \vert \widetilde H_{j-1}) \le \eta \, \text{ for all } 0\le j\le n_2-2,\end{equation}
for some constant $\eta=\eta(\cl,\cu)<1$. Since 
$n_2=[c_1^{-1}\varepsilon^{-2}]$, we have that
\begin{align*}
P(\widetilde F_1) &= \prod_{j=0}^{n_2-2} P(\widetilde H_j \vert \widetilde H_{j-1}) \le \eta^{[c_1^{-1} \varepsilon^{-2}]}
\end{align*}
as required. Let us therefore turn our attention to proving \eqref{eq:g}. Now 
\[ u_1(p_{1k}) = \big[G_{t_1}(\uv_0)(x_k)\big]_1 + N_1(p_{1k}).\]
The term $\big[G_{t_1}(\uv_0)(x_k)\big]_1$ is the first component of the deterministic term in \eqref{eq:mild}, while $N_1(p_{1k})$ are mean zero Gaussian random variables. To obtain \eqref{eq:g} we will show the existence of some $0\le \eta<1$ such that 
\begin{equation}\label{eq:g:2}
P\big(|u_1(p_{1j})|\le \varepsilon \,\vert\, \mathcal{G}_{j-1}\big) \le \eta \text{ for all } 0\le j\le n_2-2,
\end{equation}
where $\mathcal G_{j}$ is the $\sigma$-algebra generated by the random variables $N_1(p_{1k}),\, 0\le k\le j$. The inequality \eqref{eq:g:2} gives a uniform bound on the probability of the event $|u_1(p_{1j}|\le \varepsilon$, given every realization of the random variables $N_1(p_{1k}),\, 0\le k\le j-1$. In particular it gives the same bound on $P(|u_1(p_{1j})|\le \varepsilon \, \vert \,\widetilde H_{j-1})$. 

Therefore let us turn our attention to proving \eqref{eq:g:2}. 

This will be achieved by producing a uniform (in $j$) lower bound of order $C\varepsilon^2$ on the conditional variance of $u_1(p_{1j})$ given $\mathcal G_{j-1}$, where $C$ depends only on $\cl, \cu$. This will imply that the conditional distribution of $u_1(p_{1j})$ is sufficiently {\it spread out} and that the event $|u_1(p_{1j})|>\varepsilon$ has nonvanishing probability. 

By general properties of Gaussian random vectors we can decompose 
\begin{equation}\label{eq:n:decomp} u_1(p_{1j}) = \big[G_{t_1}(\uv_0)(x_{k})\big]_1+X + Y,\end{equation}
where $X$ is the conditional expectation of $ N_1(p_{1j})$ given $\mathcal G_{j-1}$. Furthermore
\begin{equation} \label{eq:X:decomp}
 X= \sum_{k=0}^{j-1} \beta_k^{(j)} N_1(p_{1k})
 \end{equation}
for some coefficients $\beta_l^{(j)}$. The variance of the random variable $Y= N_1(p_{1j})-X$ is precisely the conditional variance of $N_1(p_{1j})$ given $\mathcal G_{j-1}$, and is {\it also} the conditional variance of $u_1(p_{1j})$ given $\mathcal G_{j-1}$.

Let us use the notation $\text{SD}$ to denote the standard deviation of a random variable. By Minkowski's inequality 
\begin{align*}
\text{SD}(X)&\leq \sum_{k=0}^{j-1}|\beta_k^{(j)}|\cdot  \text{SD}\big( N_1(p_{1k})\big).
\end{align*}
We will show in the following lemma that $\sum_{k=0}^{j-1} |\beta_k^{(j)}|$ can be made less than $1/2$ by our choice of $\theta$. In particular for this choice of $\theta$ the standard deviation of $X$ is less than one half the standard deviation of $N_1(p_{1j})$. Therefore, for this choice of $\theta$
\[ \text{SD}\big( N_1(p_{1k})\big) \le \text{SD}(X)+\text{SD}(Y) \le \frac{\text{SD}\big( N_1(p_{1k})\big)}{2} +\text{SD}(Y). \]
From \eqref{eq:N:var} the variance of $N_1(p_{1j})$ is bounded below by $C_8\varepsilon^2$. We have thus shown that the conditional variance of $N_1(p_{1j})$ given $\mathcal G_{j-1}$, which is also the variance of $Y$, is uniformly (in $j$) bounded below by $C_{11}\varepsilon^2$. Recall that the conditional variance of $N_1(p_{1j})$ given $\mathcal G_{j-1}$ is also the conditional variance of $u_1(p_{1j})$ given $\mathcal G_{j-1}$. This implies
\[ P\big(|u_1(p_{1j})|\le \varepsilon \,\vert\, \mathcal{G}_{j-1}\big)\le  \eta\]
for some $\eta<1$ uniformly in $j$. Indeed, for a Gaussian random variable $Z\sim N(\mu, \sigma^2)$ and any $a>0$ the probability $P(|Z|\le a)$ is maximized when $\mu=0$. Therefore
\begin{align*}
P\big(|u_1(p_{1j})|\le \varepsilon \,\vert\, \mathcal{G}_{j-1}\big) &\le P\left(\left|N(0,1)\right|\le \frac{\epsilon}{\sqrt{\text{Var}\left(u_1(p_{1j})\,\vert\, \mathcal{G}_{j-1} \right)}}\right) \\
&\le P\left(|N(0,1)|\le \frac{1}{\sqrt{C_{11}}}\right).
\end{align*}
This completes the proof of the proposition.
\end{proof}

The only ingredient left in the proof of Proposition \ref{prop:bd} in the Gaussian case is the following lemma. 
\begin{lem} Recall the coefficients $\beta_k^{(j)}$ from \eqref{eq:X:decomp}. By choosing $\theta$ as in \eqref{chth}
\[ \sum_{k=0}^{j-1} |\beta_k^{(j)} |\le \frac12\; \text{ for all } 0 \le j \le n_2-2. \]
\end{lem}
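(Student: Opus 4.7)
The plan is to extract the linear system satisfied by the coefficients $\beta_k^{(j)}$ and then apply a diagonal-dominance estimate driven by the covariance bounds from Lemma \ref{lem:N:var}. Since $X$ is the orthogonal projection of $N_1(p_{1j})$ onto the linear span of $\{N_1(p_{1k}):\, 0 \le k \le j-1\}$, the residual $N_1(p_{1j}) - X$ is orthogonal (in $L^2$) to each $N_1(p_{1l})$, giving the normal equations
\begin{equation*}
\Sigma_{ll}\,\beta_l^{(j)} + \sum_{\substack{0 \le k \le j-1 \\ k \ne l}} \Sigma_{kl}\,\beta_k^{(j)} = b_l, \qquad l=0,\ldots,j-1,
\end{equation*}
where $\Sigma_{kl}=\Cov(N_1(p_{1k}), N_1(p_{1l}))$ and $b_l = \Cov(N_1(p_{1j}), N_1(p_{1l}))$.

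I would first isolate the diagonal term, apply the triangle inequality, and use the lower bound $\Sigma_{ll} \ge C_8\varepsilon^2$ from \eqref{eq:N:var} to deduce
\begin{equation*}
C_8\varepsilon^2\,|\beta_l^{(j)}| \le |b_l| + \sum_{k \ne l}|\Sigma_{kl}|\,|\beta_k^{(j)}|.
\end{equation*}
Summing this over $l = 0,\ldots,j-1$ and swapping the order in the resulting double sum yields
\begin{equation*}
C_8\varepsilon^2\sum_{l=0}^{j-1}|\beta_l^{(j)}| \;\le\; \sum_{l=0}^{j-1}|b_l| \;+\; \sum_{k=0}^{j-1}|\beta_k^{(j)}|\sum_{\substack{0 \le l \le j-1 \\ l \ne k}}|\Sigma_{kl}|.
\end{equation*}

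The decisive step is then the off-diagonal bound \eqref{eq:N:cov:1}. Because the intrinsic distance on the circle $[0,1]$ between any two grid points is automatically at most $1/2$, the estimate applies to every pair once $|k-k'|$ is interpreted as the circular index distance; moreover, for any fixed index there are at most two other indices at each integer circular distance $m \ge 1$. Hence both
\begin{equation*}
\sum_{l \ne k}|\Sigma_{kl}| \;\le\; 2C_{10}\varepsilon^2\sum_{m \ge 1}e^{-\theta m^2/8} \qquad\text{and}\qquad \sum_{l \le j-1}|b_l| \;\le\; 2C_{10}\varepsilon^2\sum_{m \ge 1}e^{-\theta m^2/8}.
\end{equation*}
The hypothesis \eqref{chth} is designed precisely so that $2C_{10}\sum_{m\ge1}e^{-\theta m^2/8} \le C_8/3$. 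Plugging in gives $C_8\varepsilon^2\sum_l|\beta_l^{(j)}| \le (C_8/3)\varepsilon^2 + (C_8/3)\varepsilon^2\sum_l|\beta_l^{(j)}|$, and rearranging produces $\sum_l|\beta_l^{(j)}|\le 1/2$ (in fact $\le 1/4$).

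No genuine obstacle arises: the argument is a Gershgorin-style manipulation of the covariance matrix. The one place that demands a moment of care is upgrading \eqref{eq:N:cov:1} from its literal form (proved under the regular distance constraint $|x_k - x_{k'}|\le 1/2$) to the statement in terms of the \emph{circular} distance valid for all index pairs on $[0,1]$; this is immediate from the identity $G(t,x) = G(t,x_\ast)$ that was already used in establishing \eqref{eq:N:cov:2}.
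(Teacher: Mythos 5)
Your argument is correct, but it takes a genuinely different route from the paper's. Both proofs start from the same normal equations \eqref{eq:cov:N} and rest on the same two inputs --- the variance lower bound in \eqref{eq:N:var} and the off-diagonal decay \eqref{eq:N:cov:1} --- but the paper solves the linear system by inverting the covariance matrix: it writes $\boldsymbol\beta=\mf S^{-1}\mf y$, factors $\mf S=\mf D\mf T\mf D$ with $\mf T=\mf I-\mf A$, and controls $\|\mf S^{-1}\|_{1,1}$ via the column-sum formula for $\|\cdot\|_{1,1}$ and the Neumann-series bound $\|(\mf I-\mf A)^{-1}\|_{1,1}\le (1-\|\mf A\|_{1,1})^{-1}$. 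You avoid the inversion altogether with a Gershgorin-style diagonal-dominance manipulation that produces the self-bounding inequality $C_8\varepsilon^2 S\le \tfrac{C_8}{3}\varepsilon^2+\tfrac{C_8}{3}\varepsilon^2 S$ for $S=\sum_k|\beta_k^{(j)}|$ and then rearranges (legitimate, since $S$ is a finite sum of finite numbers). This is more elementary --- no matrix-norm identities or explicit invertibility of $\mf S$ are needed --- and the constants match: \eqref{chth} gives $\tfrac{C_{10}}{C_8}\sum_{m\ge1}e^{-\theta m^2/8}<\tfrac16$, which is exactly what you require after accounting for the (at most) two indices at each distance. Two small remarks. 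First, the rearrangement yields $S\le 1/2$, not $\le 1/4$ as your parenthesis asserts (from $\tfrac23 S\le\tfrac13$); this is harmless since only $1/2$ is needed. Second, your point that \eqref{eq:N:cov:1} must be read with the circular distance for pairs with $|x_k-x_{k'}|>1/2$ is one the paper passes over silently; your fix (circular index distance $m\ge1$, at most two indices at each $m$, and the proof of \eqref{eq:N:cov:1} applying verbatim with $|x_k-x_{k'}|_*$) is correct and would equally be needed to justify the paper's own bound $\|\mf A\|_{1,1}<\tfrac13$.
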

\begin{proof}
The random variable $Y$, defined in \eqref{eq:n:decomp}, has the nice property that it is independent of $\mathcal G_{j-1}$, and therefore
\[ \Cov(Y, N_1(p_{1k}))=0 \text{ for } k=0,1\cdots j-1.\] 
It follows that 
\begin{equation} \label{eq:cov:N} \Cov\big(N_1(p_{1j}), N_1(p_{1l})\big)= \sum_{k=0}^{j-1} \beta_k^{(j)} \Cov\big(N_1(p_{1k}), N_1(p_{1l})\big)\, \text{ for all } l=0,1,\cdots, j-1.\end{equation}
Consider now the vector $\boldsymbol \beta=(\beta_0^{(j)}, \beta_1^{(j)},\cdots, \beta_{j-1}^{(j)})^T$ and 
\[ \mf y = \Big(\Cov\big(N_1(p_{1j}), N_1(p_{10})\big), \cdots, \Cov\big(N_1(p_{1j}), N_1(p_{1,j-1})\big)\Big)^T. \]
Let $\mf S= \left(\left(\Cov\big(N_1(p_{1k}), N_1(p_{1l})\big)\right)\right)_{0\le k, l\le j-1}$ be the covariance matrix. The system \eqref{eq:cov:N} can be succinctly written as $\mf y =\mf S\boldsymbol \beta$, whence 
\[ \boldsymbol \beta = \mf S^{-1} \mf y.\] 
Denote by $\|\cdot\|_{1}$ the $\ell_1$ norm on $\mf R^k$, and by $\|\cdot \|_{1,1}$ the matrix norm induced on $j\times j$ matrices by the $\|\cdot\|_{1}$ norm, that is for a matrix $\bf A$
\[ \|{\bf A}\|_{1,1} = \sup_{\bf x\ne \bf 0} \frac{\|{\bf Ax}\|_1}{\|{\bf x}\|_1}.\]
It can be shown that $\|{\bf A}\|_{1,1}= \max_j \sum_{i=1}^n |a_{ij}|$ (see page 259 of \cite{bhim-rao}). Therefore we have 
\begin{equation} \label{eq:beta:bd}
\|\boldsymbol{\beta}\|_{1}\le \|\mf S^{-1}\|_{1,1} \|{\bf y}\|_{1}.
\end{equation} 
Now we write ${\bf S}={\bf D T D }$ where $\bf D$ is a diagonal 
matrix with diagonal entries \\
$\sqrt{\Var\big(N_1(p_{1k})\big)}$, and 
$\bf T$ is the matrix of correlation coefficients 
\[ t_{kl} = \frac{\Cov\big(N_1(p_{1k}), N_1(p_{1l})\big)}{\sqrt{\Var\big(N_1(p_{1k})\big)}\sqrt{\Var\big(N_1(p_{1l})\big)}},\; 0\le k, l \le j-1.\]
Therefore ${\bf S}^{-1}= {\bf D}^{-1}{\bf T}^{-1}{\bf D}^{-1}={\bf D}^{-1}({\bf I}-{\bf A})^{-1} {\bf D}^{-1}$ for some matrix ${\bf A}$. Thanks to \eqref{eq:N:var} and \eqref{eq:N:cov:1} we can make $\|{\bf A}\|_{1,1}<\frac 13$ by choosing $\theta$ as in the second inequality in \eqref{chth}. 
Therefore \[
\|({\bf I}-{\bf A})^{-1}\|_{1,1}\le \frac{1}{1-\|{\bf A}\|_{1,1}} \le \frac32
\]
and so $\|{\bf S}^{-1}\|_{1,1} \le \|{\bf D}^{-1}\|_{1,1}\cdot \|{\bf T}^{-1}\|_{1,1}\cdot \|{\bf D}^{-1}\|_{1,1} \le 2C_8^{-1}\varepsilon^{-2}$, where $C_8$ is as in \eqref{eq:N:var}. Note in obtaining an upper bound on $\|{\bf D}^{-1}\|_{1,1}$, we have crucially used the lower bound in in \eqref{eq:N:var}, which in turn is based on the assumption that the components of $\sigma$ are bounded away from zero..
Substituting the above bounds into \eqref{eq:beta:bd} we obtain
\begin{equation} \label{eq:beta} \|\boldsymbol \beta\|_{1}\le  2C_8^{-1} \varepsilon^{-2} \|{\bf y}\|_1,
 \end{equation}
which can be made less than $1/2$ by choosing $\theta$ as in \eqref{chth}.  \end{proof}

\begin{proof}[Proof of Proposition \ref{prop:bd}(a) ({\em The general case} i.e general $\sigma$)] By the Markov property it is enough to show the bound $P(F_1) \le C_0 \exp\left(-\frac{C_1}{(1+\dlip^2)\varepsilon^{2}}\right)$, for constants $C_0, C_1$ depending only on $\cl, \cu$, starting with a deterministic initial profile $\uv_0$ with $|\uv_0(x_j)|\le \varepsilon$ for all $j\le n_2-2$. 

For a point ${\bf z} \in \R^d$ define 
\begin{equation*}
f_{\varepsilon}\big({\bf z}\big) = \begin{cases} {\bf z}, &  |{\bf z}|\le \varepsilon, \\
\frac{\varepsilon}{|{\bf z}|}{\bf z}, &|{\bf z}|>\varepsilon.\end{cases}
\end{equation*}
In particular $|f_\varepsilon({\bf z})|\le \varepsilon$. We consider the equation 
\begin{equation*}
\partial_t\vv(t,x)=\frac{1}{2}\,\partial_x^2\vv(t,x)+\sigma\left(t, x, f_\varepsilon\left(\vv(t,x)\right)\right)\cdot\dot{\mf W}(t,x)
\end{equation*}
with initial profile $\uv_0$. It is clear that as long as $|\uv(t,x)|\le \varepsilon$ for all $x$, we have $\uv(t,\cdot)=\vv(t,\cdot)$. Therefore it enough to prove the proposition for $\vv$. 

We compare $\vv$ with $\vv_g$ where 
\begin{equation}\label{eq:vg} 
\partial_t\vv_g 
= \frac12\,\partial_x^2\vv_g + \sigma\big(t,x, f_\varepsilon\left(\uv_0(x)\right) \big)\cdot\dot \W (t,x)
\end{equation}
starting with the same initial profile $\uv_0$. We decompose $\vv(t,x)= \vv_g(t,x) + \D(t,x)$, with 
\[ \D(t,x) = \int_0^t \int_0^1 G(t-s, x-y) \left[\sigma\big( s, y, f_\varepsilon(\vv(s,y))\big) - \sigma\big( s, y, f_\varepsilon(\uv_0(y))\big)\right]\cdot \W(dyds).\]
Define
\[ H_j = \{|\vv(p_{1j})|\le \varepsilon \}, \]
so that $F_1=\cap_{j=0}^{n_2-2} H_j$. Define also the events 
\begin{align*}
A_{1,j} &= \{|\vv_g(p_{1j})|\le 2\varepsilon \}, \text{ and }\\
A_{2,j}&=  \{ |\D(p_{1j})|> \varepsilon \}.
\end{align*}
If $|\D(p_{1j})|\le \varepsilon$ and $|\vv_g(p_{1j})|>2\varepsilon$, we must
have $|\vv(p_{1j})|>\varepsilon$. As a consequence,
\[ H_j \subset A_{1,j} \cup A_{2,j}. \]
Therefore
\[ P(F_1) \le P\left( \bigcap_{j=0}^{n_2-2} \left[A_{1,j} \cup A_{2,j} \right]\right). \] 
The intersection can be expanded as the union of various events. One of the terms in the union is
\[  A_{1,0}\cap A_{1,1} \cap A_{1,2}\cap \cdots \cap A_{1, n_2-2}. \]
The remaining events in the union looks like this: 
\[ A_{1,0}\cap A_{1,1} \cap A_{1,2}\cap \cdots \cap A_{1, k-1} \cap A_{2,k} \cap \cdots.\]
That is, it will involve a run of  $\{A_{1,j}: 0 \leq j \leq k-1\}$ and then by $A_{2,k}$  followed by the intersection with other sets. We collect all these events which have the same time $k$ of the first appearance of an $A_{2,j}$. The probability of the union of all these sets is dominated by  $$ P(A_{1,0} \cap A_{1,1}\cap A_{1,2} \cdots \cap A_{1, k} \cap A_{2,k}).$$ Thus we obtain the upper bound 
\begin{align*}
P(F_1) & \le P \left( \bigcap_{j=0}^{n_2-2} A_{1,j}\right) + \sum_{k=0}^{n_2-2} P\left( A_{1,0}\cap A_{1,1} \cap A_{1,2} \cdots \cap A_{1, k-1} \cap A_{2,k}\right) \\
& \le P \left( \bigcap_{j=0}^{n_2-2} A_{1,j}\right) + \sum_{j=0}^{n_2-2} P(A_{2,j}).
\end{align*}
The argument for the Gaussian case shows that 
\begin{equation} \label{eq:a1}P \left( \bigcap_{j=0}^{n_2-2} A_{1,j}\right) = P\Big(\vert \vv_g(p_{nj})| \le 2\varepsilon \text{ for all } j\le n_2-2 \Big) \le C_2 \exp\left(-C_3\varepsilon^{-2}\right),
\end{equation}
for constants $C_2, C_3$ depending only on $\cl, \cu$. By the Lipschitz assumption on $\sigma$ we have 
\[ \big \vert \sigma\big(t,x, f_\varepsilon(\vv(t, x))\big) -\sigma\big(t,x, f_\varepsilon(\uv_0(x))\big) \big \vert \le 2\dlip \varepsilon.\]
An application of \eqref{eq:N:tail} with $\mf N$ replaced by $\D$ (see Remark \ref{rem:N:tail:eps}) gives 
\[ P(A_{2,j}) \le {\mf K}_1\exp\left(-\frac{{\mf K}_2}{4\cu^2\dlip^2\varepsilon^{2}\sqrt c_0}\right).\]
Then by adjusting $\mf K_1,\mf K_2$ and using our bound 
\eqref{eq:est-n2} on $n_2$, we get
\begin{align*} 
\sum_{j=0}^{n_2-2} P(A_{2,j})
&\leq (n_2-2){\mf K}_1\exp\left(-\frac{{\mf K}_2}{4\cu^2\dlip^2\varepsilon^{2}\sqrt c_0}\right) \\
&\leq \frac{1}{c_1\varepsilon^2}
  {\mf K}_1\exp\left(-\frac{{\mf K}_2}{4\cu^2\dlip^2\varepsilon^{2}\sqrt c_0}\right) \\
&\leq  {C}_4\exp\left(-\frac{{C}_5}{8\cu^2(1+\dlip^2)\varepsilon^{2}\sqrt c_0}\right)
\end{align*}
when $\varepsilon$ is small enough, for some constants $C_4, C_5$ dependent on $d, \cu$. Combining the above bound with \eqref{eq:a1} 
completes the proof of the proposition for the general case.  
\end{proof}

\subsection{Proof of Proposition \ref{prop:bd}(b)}
We begin by stating  the Gaussian correlation inequality which we will need.

\begin{lem} \label{lem:gcorr}
For any convex symmetric sets $K,L$ in ${\mf R}^d$ and any centered Gaussian 
measure $\mu$ on ${\mf R}^d$ we have
\begin{equation*}
\mu(K\cap L)\ge \mu(K)\mu(L).
\end{equation*}
\end{lem}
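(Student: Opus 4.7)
The statement is the celebrated Gaussian Correlation Inequality (GCI), which was conjectured in the late 1960s and remained open for nearly half a century until Thomas Royen settled it in 2014. Since a full self-contained proof would occupy several pages and is essentially orthogonal to the probabilistic/PDE machinery of this paper, my plan is to invoke Royen's theorem as a black box and describe, for orientation, the skeleton of his argument.

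The first reduction is geometric: by standard approximation, every centered symmetric convex body in $\mathbb{R}^d$ can be written as an intersection of symmetric slabs $\{x : |\langle a,x\rangle|\le 1\}$. Monotone convergence therefore reduces the claim to the case where $K=\{x:|\langle a_i,x\rangle|\le r_i,\,i=1,\dots,n\}$ and $L=\{x:|\langle b_j,x\rangle|\le s_j,\,j=1,\dots,m\}$. Writing $|\cdot|\le \cdot$ as $(\cdot)^2\le(\cdot)^2$, the event $\{X\in K\cap L\}$ for $X\sim\mathcal{N}(0,\Sigma)$ becomes a statement about the joint distribution of the squared linear functionals $\bigl(\langle a_i,X\rangle^2,\langle b_j,X\rangle^2\bigr)$, which is a multivariate chi-square (equivalently, a multivariate gamma) distribution.

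The heart of Royen's proof is an explicit formula for the Laplace transform of a multivariate gamma distribution built from a positive semidefinite matrix, namely $\det(I+S\Lambda)^{-1/2}$ for a diagonal matrix $\Lambda$ of Laplace parameters and a suitable $S$. One interpolates between the block-diagonal covariance (in which the $a_i$-block and the $b_j$-block are independent, giving the product measure) and the true covariance by a one-parameter family obtained by multiplying the off-diagonal cross-correlation block by $t\in[0,1]$. Differentiating the probability $\mu_t(K\cap L)$ in $t$ and applying the Laplace-inversion representation, Royen exhibits the derivative as an integral of manifestly nonnegative quantities; hence $t\mapsto \mu_t(K\cap L)$ is nondecreasing, and comparing $t=0$ with $t=1$ yields $\mu(K\cap L)\ge\mu(K)\mu(L)$.

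The hard part is step three, i.e., verifying the sign of the $t$-derivative via the multivariate gamma Laplace transform; this is exactly the content of Royen's original note \emph{A simple proof of the Gaussian correlation conjecture extended to some multivariate gamma distributions} (Far East J. Theor. Stat., 2014). Since for the present paper we only apply the inequality to two particular symmetric convex sets arising from supremum norms of Gaussian fields, it suffices to quote the theorem in the form stated. Accordingly, the proof in this excerpt will simply read: \emph{This is the Gaussian correlation inequality proved by Royen, to which we refer.}
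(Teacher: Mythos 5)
Your proposal is correct and takes essentially the same route as the paper: the authors also treat the Gaussian correlation inequality as a known result and simply cite Royen's 2014 proof (together with the Lata\l{}a--Matlak exposition) rather than reproving it. Your added sketch of Royen's interpolation argument via the multivariate gamma Laplace transform is accurate but not needed for the paper's purposes.
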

\begin{proof} cf. \cite{roye} and \cite{lata-matl}.
  \end{proof}

By the Markov property of the solution $\uv(t, \cdot)$ to \eqref{eq:she}  (see page 247 in \cite{dapa-zabc}), the behavior of $\uv(t,\cdot)$ in the interval $I_n$ depends only on the profile $\uv(t_n,\cdot)$. Therefore it is enough to prove that there
are constants $C_1,C_2 >0$ such that 
\begin{equation}\label{eq:a_0}
  P(A_0)\ge C_1 \exp (-C_2\varepsilon^{-2}),
\end{equation} for constants $C_1, C_2$ depending only $\cl,\cu$ when $\uv_0$ satisfies $|\uv_0(x)|\le \varepsilon/3$. 

 \begin{proof}[Proof of Proposition \ref{prop:bd}(b) ({\em ${\mathscr D} \equiv 0$ -- The Gaussian case} i.e. deterministic $\sigma$)]    
Consider the event
\be 
\label{eq:b0} B_0= \left\{ |\uv(t_{1}, x)|\le \frac\varepsilon 6 \;\;\forall x, \text{ and } |\uv(t,x)|\le \frac{2\varepsilon}{3} \;\;\forall t\in I_0, \, x\in [0,1] \right\}. \ee
We shall prove below  
a lower bound of the form \eqref{eq:a_0} with the event $A_0$ replaced by $B_0$. Since $B_0\subset A_0$, this implies \eqref{eq:a_0}.

We employ a standard technique in large deviation theory to obtain lower bounds on probabilities of atypical events. That is, we construct a measure $Q$, absolutely continuous with respect to $P$, under which the event $A_0$ is likely. Once we have done this, we next control the Radon Nikodym derivative $\frac {dQ}{dP}$. Let us denote
\[ G_t (\uv_0)(x) = \int_0^1 G(t, x-y) \uv_0(y) dy,\]
the space convolution of $G(t,\cdot)$ and $\uv_0$. Consider the measure $Q_t$ given by
\begin{equation*}
\frac{dQ_t}{dP_t} = \exp \left( Z_{t_1}^{(1)}-\frac12 Z_{t_1}^{(2)}\right)
\end{equation*}
 where 
 \begin{align*}
 Z_{t_1}^{(1)} &= -\int_0^{t_1}\int_0^1 \sigma^{-1}(s, y) \ \frac{G_s(\uv_0) 
 (y)}{t_1} \, \W(dy ds),\\
 Z_{t_1}^{(2)} &= \int_0^{t_1}\int_0^1 \left| \sigma^{-1}(s, y) \ \frac{G_s(\uv_0) (y)}{t_1} \right|^2 \, dy ds.
 \end{align*}
By Lemma \ref{lem:girsanov} 
 \[ \dot {\widetilde{\W}}(s,y) := \dot {\W}(s,y) + \sigma^{-1}(s,y)\frac{G_s
 (\uv_0)(y)}{t_1} \]
 is a white noise under the measure $Q_t$. We now write $\uv(t,x)$ as
\begin{equation}\label{eq:u:Q}
\begin{split}
\int_{0}^{1}&G(t,x-y)\,\mf u_0(y)\,dy  \\
&\;+\int_{0}^{t}\int_{0}^{1}G(t-s,x-y)\,\sigma\left(s, y 
\right)\,\left[{\widetilde \W}(dyds)- \sigma^{-1}(s,y)\,\frac{G_s(\uv_0) (y)}{t_1}\, dsdy\right]  \\
&= \left(1-\frac{t}{t_1}\right) G_t(\uv_0)(x) +\int_0^t \int_0^1G(t-s,x-y)\,\sigma\left(s, y 
\right) \, \widetilde{\W}(dy ds)
\end{split}
\end{equation} 
The first term is equal to $0$ at time $t_1$ and 
\begin{equation}
\label{eq:first:term}
 \left|\left(1-\frac{t}{t_1}\right) G_t\uv_0(x) \right| \le \frac{\varepsilon}{3},\quad x\in [0,1],\;t\le t_1 . 
\end{equation}
 Denote the second term by 
 \[\widetilde \N(t,x)= \int_0^t \int_0^1G(t-s,x-y)\,\sigma\left(s, y 
\right) \, \widetilde{\W}(dy ds).\]
 Since $\dot {\widetilde \W} $ is a white noise under $Q_t$ we can apply \eqref{eq:N:tail} to conclude 
 \begin{equation*} \begin{split}
 { Q_t \left(\sup_{\stackrel{0\le t\le t_1}{x\in [0, c_0\varepsilon^2]}} |\widetilde \N(t,x)|>\frac \varepsilon 6 \right)} &=  { Q_t \left(\sup_{\stackrel{0\le t\le c_0^{-1}(c_0\varepsilon^2)^2}{x\in [0, c_0\varepsilon^2]}} |\widetilde \N(t,x)|>(6\sqrt c_0)^{-1}\cdot( \sqrt c_0\varepsilon )\right)}\\
& \le  { \mf K_1\exp\left( - \frac{\mf K_2}{36 \mathscr C_2^2\sqrt c_0 }\right),}
 \end{split}
\end{equation*}
where we used $\alpha=c_0^{-1}$ and $\lambda= (6\sqrt c_0)^{-1}$ in \eqref{eq:N:tail}.  An application of Lemma \ref{lem:gcorr} gives the following.  
\begin{align*} 
Q_t \left(\sup_{\stackrel{0\le t\le t_1}{x\in [0,1]}} |\widetilde \N(t,x)|\le \frac \varepsilon 6 \right) 
&\ge  { Q_t \left(\sup_{\stackrel{0\le t\le t_1}{x\in [0,c_0\varepsilon^2]}} |\widetilde \N(t,x)|\le \frac \varepsilon 6 \right)^{\frac{1}{c_0\varepsilon^2}} }\\
& \ge { \left[ 1- \mf K_1\exp\left( - \frac{\mf K_2}{36\mathscr C_2^2\sqrt c_0 }\right) \right]^{\frac{1}{c_0\varepsilon^2}}  }
\end{align*}
We observed earlier that the first term in \eqref{eq:u:Q} is bounded by $\varepsilon/3$ and is $0$ at time $t_1$. Therefore 
\[Q_t(B_0) \ge Q_t \left(\sup_{\stackrel{0\le t\le t_1}{x\in [0,1]}} |\widetilde \N(t,x)|\le \frac \varepsilon 6 \right). \]
We finally compare $P_t(A_0)$ with $Q_t(A_0)$. To do this we first observe
\begin{align*}
 E\left(\frac{dQ_t}{dP_t}\right)^2 &= \exp \left(\int_0^{t_1}\int_0^1 \left|
 \sigma^{-1}(s, y) \ \frac{G_s(\uv_0)(y)}{t_1} \right|^2 \, dy ds\right) \\
 &\le {\exp \Big(\frac{1}{c_0\mathscr C_1^2\varepsilon^{2}}\Big) } 
 \end{align*}
The inequality is a consequence of the bound $\max_{|\mf x|=1} | \sigma^{-1} \mf x|^2 \le \lambda_1^{-2}\le \mathscr C^2$, where $\lambda_1$ is the smallest eigenvalue of $\sigma$. By the Cauchy-Schwarz inequality
\begin{align*}
Q_t(B_0) &\le \sqrt{E\left(\frac{dQ_t}{dP_t}\right)^2 } \cdot \sqrt{P_t(B_0)},
\end{align*} 
and from this it follows
\begin{equation} 
\label{eq:b_0} 
P(B_0) \ge {\exp \Big(-\frac{1}{c_0\mathscr C_1^2\varepsilon^{2}}\Big) \exp\left(\frac{2}{c_0\varepsilon^2} \log \left[ 1- \mf K_1\exp\left( - \frac{\mf K_2}{36\mathscr C_2^2\sqrt c_0 }\right)\right]\right)},
\end{equation}
where $\mf K_1, \, \mf K_2$ are the constants appearing in \eqref{eq:N:tail}. With our choice of $c_0$ in \eqref{chc0}, this completes the proof of the proposition in the case of deterministic $\sigma$.
 \end{proof}

 For the Proof of Proposition \ref{prop:bd}(b) with general $\sigma$ we compare $\uv$ with $\uv_g$, where
\begin{equation}\label{eq:ug} \partial_t \uv_g = \frac12 \partial_x^2 \uv_g + \sigma\big(t,x, \uv_0(x) \big)\dot \W (t,x),\quad t\in [0,c_0\varepsilon^4],\; x\in [0,1],\end{equation}
with the same initial profile $\uv_0$. Recall that we are assuming $|\uv_0(x)|\le \varepsilon/3$ for all $x$. We write
\begin{equation} \label{eq:u:ug} \uv(t,x) =\uv_g(t,x) + \D(t,x),\end{equation}
where 
\[\D(t,x) = \int_0^t \int_0^1 G(t-s, x-y) \left[\sigma\big(s, y, \uv(s,y)\big) - \sigma\big(s, y, \uv_0(y)\big)\right] \W(dyds).\]
Let us define
 \be\label{eq:b0:t} \widetilde B_0= \left\{ |\uv_g(t_{1}, x)|\le \frac\varepsilon 6 \;\;\forall x, \text{ and } |\uv_g(t,x)|\le \frac{2\varepsilon}{3} \;\;\forall t\in I_0, \, x\in [0,1] \right\}. \ee
Since the solution of \eqref{eq:ug} is Gaussian, we can apply \eqref{eq:b_0} applied to $\uv_g$ to obtain 
\begin{equation} \label{eq:tilde:b0} P(\widetilde B_0) \ge {\exp \Big(-\frac{1}{c_0\mathscr C_1^2\varepsilon^{2}}\Big) \exp\left(\frac{2}{c_0\varepsilon^2} \log \left[ 1- \mf K_1\exp\left( - \frac{\mf K_2}{36\mathscr C_2^2\sqrt c_0 }\right)\right]\right)},\end{equation}
{ where $\mf K_1$ and $\mf K_2$ are the constants appearing in \eqref{eq:N:tail}. }

\begin{proof}[Proof of Proposition \ref{prop:bd}(b)(general $\sigma$)]
We will again prove \eqref{eq:a_0}. As discussed in the Gaussian case, we will use the Markov property of the solution $\uv(t, \cdot)$ to \eqref{eq:she} and therefore it is enough to prove that there
are constants $C_1,C_2 >0$ such that 
\begin{equation}\label{eq:a_0again}
  P(A_0)\ge C_1 \exp (-C_2\varepsilon^{-2}),
\end{equation} for constants $C_1, C_2$ depending only $\cl,\cu$ when $\uv_0$ satisfies $|\uv_0(x)|\le \varepsilon/3$. 
  Define 
\[ \tau = \inf\{t: |\uv(t,x) -\uv_0(x)|>2\varepsilon,\, \text{ for some } x\in [0,1]\}.\]
Since $\mid \uv_0(x) \mid \leq \frac{\epsilon}{3}$, we must have $\tau>t_1$ on the event $A_0$. Moreover on the event $\tau>t_1$ we have $\D(t,x) = \widetilde \D(t,x)$ for $t\le t_1$, where 
\[ \widetilde \D(t,x) = {\int_0^t \int_0^1 G(t-s, x-y) \left[\sigma\big( s, y, \uv(s\wedge \tau,y)\big) - \sigma\big(s, y, \uv_0(y)\big)\right] \W(dyds).}\]
Therefore, thanks to the decomposition \eqref{eq:u:ug}, we have
\begin{align*}
P(A_0) &\ge P\left( \widetilde B_0 \cap \left\{ \sup_{\stackrel{0\le t\le t_1}{x\in[0,1]}} \big|\D(t,x)\big|\le \frac\varepsilon 6\right\}\right) \\
&= P\left( \widetilde B_0 \cap \left\{ \sup_{\stackrel{0\le t\le t_1}{x\in[0,1]}} \big|\widetilde\D(t,x)\big|\le \frac\varepsilon 6\right\}\right)\\
&\ge P(\widetilde B_0) - P \left(\sup_{\stackrel{0\le t\le t_1}{x\in[0,1]}} \big|\widetilde \D(t,x)\big|> \frac\varepsilon 6\right). 
\end{align*}
Let us explain the equality above. On the event $\{\tau>t_1\}$ we have $\sup_{x\le 1,t\le t_1} |\D(t,x)|=\sup_{x\le 1,t\le t_1} |\widetilde\D(t,x)|$, whereas on the event $\widetilde B_0\cap \{\tau \le t_1\}$ we have 
\[ \sup_{t\le t_1,\, x\le 1} |\widetilde \D(t,x)| \ge \sup_x |\widetilde \D(\tau,x)|=\sup_x|\D(\tau,x)| \ge \varepsilon,\]
a consequence of $|\uv_g(\tau,x)|\le2\varepsilon/3$ and $|\uv(\tau,x)-\uv_0(x)|>2\varepsilon$ for some $x$ (recall that we are assuming that $|\uv_0(x)|\le \varepsilon/3$ for all $x$).  
Now a union bound gives 
\begin{equation}\label{eq:Dt}
\begin{split}
P\left(\sup_{\stackrel{0\le t\le t_1}{x\in[0,1]}} \big|\widetilde \D(t,x)\big|> \frac\varepsilon 6 \right) 
& \le \frac{1}{\sqrt c_0\varepsilon^2} P\left(\sup_{\stackrel{0\le t\le t_1}{x\in[0,\sqrt c_0\varepsilon^2]}} \big|\widetilde \D(t,x)\big|> \frac\varepsilon 6 \right) 
\\
&{ =\frac{1}{\sqrt c_0\varepsilon^2}  P\left(\sup_{\stackrel{0\le t\le t_1}{x\in[0,\sqrt c_0\varepsilon^2]}} \big|\widetilde \D(t,x)\big|> \frac{1}{6c_0^{1/4}}\cdot c_0^{1/4} \varepsilon\right)  }\\
&\le {\frac{\mf K_1}{\sqrt c_0\varepsilon^2} \exp\left( - \frac{\mf K_2}{144\mathscr D^2  {\msr C}^2_2 \varepsilon^2 \sqrt c_0}\right),}
\end{split}
\end{equation}
where we applied Remark \ref{rem:N:tail:eps} to $\tilde \D$ instead of $\N$. Thus when $\varepsilon$ is small enough we have 
\begin{equation}
\label{eq:D-tilde-estimate}
P\left(\sup_{\stackrel{0\le t\le t_1}{x\in[0,1]}} \big|\widetilde \D(t,x)\big|> \frac\varepsilon 6 \right)  \le {\mf K_1 \exp\left( - \frac{\mf K_2}{288\mathscr D^2\cu^2 \varepsilon^2 \sqrt c_0}\right)}
\end{equation}
Using \eqref{eq:tilde:b0} and \eqref{eq:D-tilde-estimate}  we have 
\begin{equation} \label{eq:a0}
\begin{split}
P(A_0) & \ge{ \exp \Big(-\frac{1}{c_0\mathscr C_1^2\varepsilon^{2}}\Big) \exp\left(\frac{2}{c_0\varepsilon^2} \log \left[ 1- \mf K_1\exp\left( - \frac{\mf K_2}{36\mathscr C_2^2\sqrt c_0 }\right)\right]\right) }  \\
&  \hspace{3cm} {- \mf K_1\exp\left( - \frac{\mf K_2}{288\mathscr D^2  {\msr C}^2_2 \varepsilon^2\sqrt c_0 }\right).}
\end{split}
\end{equation}
Consequently there is a ${\mathscr D}_0(\cl, \cu)$  such that if ${\mathscr D} < {\mathscr D}_0$ then there are constants $C_1, C_2 >0$ depending only on $\cl, \cu$ such that
\begin{equation} \label{eq:fa0}
P(A_0) \ge { C_1\exp \Big(-\frac{C_2}{\varepsilon^{2}}\Big)}.
\end{equation}
This completes the proof of Proposition \ref{prop:bd}(b).
\end{proof}

We can now give the proof of Theorem \ref{th:small-ball}
\subsection{Proof of Theorem \ref{th:small-ball}:} Recall that we are working with $J=1$ and $\g\equiv \mf 0$. For the upper bounds in \eqref{corder} and \eqref{acorder} we consider the set 
\[ F= \bigcap_{n=0}^{n_1-2} F_n.\]
Clearly we have 
\[ \left\{\big|\uv(t,x)\big|\le \varepsilon \text{ for } t\in [0,T],\, x\in [0,1]\right\}\; \subset \;F.\]
From Proposition \ref{prop:bd}(a) we obtain
\[ P(F) = \prod_{n=1}^{n_1-2}P\Big(F_n \Big\vert \bigcap_{k=0}^{n-1} F_k\Big) \le \left[C_4 \exp\left(- \frac{C_5}{(1+\dlip^2)\varepsilon^{2}}\right)\right]^{\frac{T}{c_0\varepsilon^4}},\]
which proves the upper bounds in Theorem \ref{th:small-ball}.

For the lower bound in \eqref{corder} consider the set
\[A:= \bigcap_{n=-1}^{n_1-1} A_n. \]
Clearly we have 
\[ A\; \subset\; \left\{ \big|\uv (t,x)\big|\le \varepsilon,\mbox{ for } 0\le t\le T, \, x\in [0,1]\right \}.\]
From Proposition \ref{prop:bd}(b) we obtain for $\dlip <\dlip_0$ 
\[ P(A) = \prod_{n=-0}^{n_1-1}P\Big(A_n \Big\vert \bigcap_{k=-1}^{n-1} A_k\Big) \ge \left[C_6 \exp \left(- \frac{C_7}{\varepsilon^{2}}\right)\right]^{\frac{T}{c_0\varepsilon^4}},  \]
which proves the lower bound in \eqref{corder}.

To prove the lower bound in \eqref{acorder}, we now take $t_1=\varepsilon^{4+2\delta}$ (note that this is smaller than the earlier value of $t_1=c_0\varepsilon^4$, at least for small $\varepsilon$), and generally $t_n=n\varepsilon^{4+2\delta}$. We consider the events $A_n$ (see \eqref{eq:An}), $B_0$ (see \eqref{eq:b0}) and $\widetilde B_0$ (see \eqref{eq:b0:t}), now {\it with this new value of $t_n$}. We have as before 
\[ P(A_0) \ge P(\widetilde B_0) - P \left(\sup_{\stackrel{0\le t\le t_1}{x\in[0,1]}} \big|\widetilde \D(t,x)\big|> \frac\varepsilon 6\right).\]
Since $t_1<c_0\varepsilon^{4+\delta}$ for small $\varepsilon$, we have the
same lower bound \eqref{eq:tilde:b0} for $P(\widetilde B_0)$:
\be \label{eq:b0t:2} P(\widetilde B_0) \ge \exp \Big(-\frac{1}{c_0\mathscr C_1^2\varepsilon^{2}}\Big) \exp\left(\frac{2}{c_0\varepsilon^2} \log \left[ 1- \mf K_1\exp\left( - \frac{\mf K_2}{36\mathscr C_2^2\sqrt c_0 }\right)\right]\right),\ee
because $\tilde{B}_0$ is  a larger event than the {\it earlier} defined event
with $c_0\varepsilon^4$. By a similar argument as \eqref{eq:Dt} we obtain
\[P\left(\sup_{\stackrel{0\le t\le t_1}{x\in[0,1]}} \big|\widetilde \D(t,x)\big|> \frac\varepsilon 6 \right) 
\le \frac{\mf K_1}{\varepsilon^{2+\frac \delta 2}} \exp\left( - \frac{\mf K_2}{144\mathscr D^2  {\msr C}^2_2 \varepsilon^{2+\frac\delta 2} \sqrt c_0}\right).\]
This is much smaller than the lower bound for $P(\widetilde B_0)$, for small $\varepsilon$. Therefore $P(A_0) \ge C_1\exp(-C_2\varepsilon^{-2})$ for constants $C_1,C_2$ depending on $\cl, \cu$ only, when $\varepsilon$ is small enough. Now with $n_1=T\varepsilon^{-4-\delta}$ we obtain 
\[ P(A) = \prod_{n=0}^{n_1-1}P\Big(A_n \Big\vert \bigcap_{k=-1}^{n-1}
A_k\Big) \ge \left[C_6 \exp \left(- \frac{C_7}{\varepsilon^{2}}\right)\right]^{\frac{T}{\varepsilon^{4+\delta}}},  \]
which gives the lower bound in \eqref{acorder}. \qed

\newcommand{\etalchar}[1]{$^{#1}$}
\providecommand{\bysame}{\leavevmode\hbox to3em{\hrulefill}\thinspace}
\providecommand{\MR}{\relax\ifhmode\unskip\space\fi MR }
\providecommand{\MRhref}[2]{%
  \href{http://www.ams.org/mathscinet-getitem?mr=#1}{#2}
}
\providecommand{\href}[2]{#2}

\end{document}